\documentclass[letterpaper]{amsart}
\usepackage[margin=1in]{geometry} 
\usepackage[utf8]{inputenc}
\usepackage{amsmath,amsthm,amssymb,amsfonts,mathrsfs}
\usepackage{dsfont}
\usepackage{bookmark}
\usepackage{xcolor}

\numberwithin{equation}{section}
    
\newtheorem{theorem}{Theorem}[section]
\newtheorem{lemma}[theorem]{Lemma}
\newtheorem{proposition}[theorem]{Proposition}

\theoremstyle{definition}

\theoremstyle{remark}
\newtheorem{remark}[theorem]{Remark}

\title[Resolvent Restriction Estimates]{$L^p - L^q$ resolvent restriction estimates for submanifolds}
\author[M. D. Blair]{Matthew D. Blair}
\author[C. Park]{Chamsol Park}
%    Address of record for the research reported here
\address{Department of Mathematics and Statistics, University of New Mexico, Albuquerque, NM 87131, USA}
\email{blair@math.unm.edu}
\address{Department of Mathematics, Johns Hopkins University, Baltimore, MD 21218, USA}
\email{cspark@jhu.edu}
%    \thanks will become a 1st page footnote.
\date{}
\keywords{Laplacian, Restriction on submanifolds, Uniform Sobolev estimates, Resolvent estimates}
\subjclass[2020]{Primary 35S30; Secondary 42B37, 58J40}

\usepackage{pgfplots}
\pgfplotsset{compat=1.16}

\begin{document}

\begin{abstract}
    We consider restriction analogues on hypersurfaces of the uniform Sobolev inequalities in Kenig, Ruiz, and Sogge \cite{KenigRuizSogge1987UniformSobolev} and the resolvent estimates in Dos Santos Ferreira, Kenig, and Salo \cite{DSFKenigSalo2014Forum}.
\end{abstract}

\maketitle

\section{Introduction}

The purpose of this paper is to find restriction analogues of the work in Kenig, Ruiz, and Sogge \cite{KenigRuizSogge1987UniformSobolev} and Dos Santos Ferreira, Kenig, and Salo \cite{DSFKenigSalo2014Forum}. We denote by $H^{2, p}$ the space of functions with two derivatives in $L^p (\mathbb{R}^n)$. We write the flat Laplacian on $\mathbb{R}^n$ as $\Delta$. One of the results in \cite{KenigRuizSogge1987UniformSobolev} was that given $\delta>0$, for all $z\in \mathbb{C}$ with $|z|\geq \delta$, we have that
\begin{align}\label{KRS estimate}
    \|u\|_{L^q (\mathbb{R}^n)}\leq C \|(\Delta+z)u\|_{L^p (\mathbb{R}^n)},\quad u\in H^{2, p}(\mathbb{R}^n),
\end{align}
where
\begin{align}\label{KRS p, q conditions}
    \frac{1}{p}-\frac{1}{q}=\frac{2}{n},\quad \text{and } \min\left(\left|\frac{1}{p}-\frac{1}{2} \right|, \left|\frac{1}{q}-\frac{1}{2} \right| \right)>\frac{1}{2n}.
\end{align}
The estimate \eqref{KRS estimate} was also studied further on the flat torus in Shen \cite{Shen2001AbsoluteContinuity}, and Shen and Zhao \cite{ShenZhao2008UniformSobolevInequalities}.

As observed in \cite[\S 3]{KenigRuizSogge1987UniformSobolev}, the estimate of the form \eqref{KRS estimate} is helpful in unique continuation results for the solutions to the Schr\"oringer equations. Later, it was discovered that the estimate of the type of \eqref{KRS estimate} can be applied to various other circumstances (see, e.g., Goldberg and Schlag \cite{GoldbergSchlag2004LimitingAbsorption}, Ionescu and Schlag \cite{IonescuSchlag2006AgmonKatoKuroda}, Frank \cite{Frank2011EigenvalueBoundsWithComplexPotentials}, Frank and Sabin \cite{FrankSabin2017RestrictionTheorem}, and references therein).

Let $\Sigma$ be a smooth hypersurface in $\mathbb{R}^n$. We can compute $\|u\|_{L^q (\Sigma)}$ as in the Stein-Tomas restriction theorem (cf. \cite[Corollary 2.2.2]{Sogge1993fourier}). Indeed, suppose that $\Sigma$ is a smooth hypersurface in $\mathbb{R}^n$. For $\tilde{\beta}\in C_0^\infty (\mathbb{R}^n)$, we consider a compactly supported measure
\begin{align}\label{Compact induced measure}
    d\mu(x)=\tilde{\beta}(x)d\sigma(x),\quad x\in \mathbb{R}^n,
\end{align}
where $d\sigma(x)$ is the surface measure on $\Sigma$. We can then write
\begin{align}\label{Restriction Lq u computation}
    \|u\|_{L^q (\Sigma)}=\left(\int_\Sigma |u(x)|^q\:d\mu \right)^{\frac{1}{q}}=\left(\int_\Sigma |u(x)|^q\tilde{\beta}(x)\:d\sigma(x) \right)^{\frac{1}{q}}.
\end{align}
With this in mind, we first show a restriction analogue of \eqref{KRS estimate}.

\begin{theorem}\label{Thm Euclidean resolvent restriction estimates for hypersurfaces}
    Let $n\geq 3$. Suppose $\Sigma$ is a smooth hypersurface in $\mathbb{R}^n$. If
    \begin{align}\label{Exponent condition for hypersurfaces in Rn}
        \begin{split}
            \frac{n}{p}-\frac{n-1}{q}=2 \text{ and } \begin{cases}
                \frac{2(n-1)^2}{n^2-3n+4}<q<\frac{2(n-1)}{n-3}, & \text{when } n=3, 4, 5, \\
                \frac{2n^2-5n+4}{n^2-4n+8}<q<\frac{2(n-1)}{n-3}, & \text{when } n\geq 6,
            \end{cases}
        \end{split}
    \end{align}
    then, given $\delta>0$, for all $z\in \mathbb{C}$ with $|z|\geq \delta>0$,
    \begin{align}\label{Unif Sobolev est for hypsurf}
        \|u\|_{L^q (\Sigma)}\leq C_\delta \|(-\Delta+z)u\|_{L^p (\mathbb{R}^n)},\quad u\in H^{2, p} (\mathbb{R}^n),
    \end{align}
    where $\|u\|_{L^q(\Sigma)}$ is computed as in \eqref{Restriction Lq u computation}.
\end{theorem}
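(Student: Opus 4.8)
The plan is to rewrite \eqref{Unif Sobolev est for hypsurf}, upon setting $f=(-\Delta+z)u\in L^p(\mathbb R^n)$, as the uniform-in-$z$ resolvent--restriction bound
\[
  \|(-\Delta+z)^{-1}f\|_{L^q(\Sigma)}\ \le\ C_\delta\,\|f\|_{L^p(\mathbb R^n)},\qquad |z|\ge\delta ,
\]
where $(-\Delta+z)^{-1}$ is understood via limiting absorption when $z$ lies on the negative real axis and $\|\cdot\|_{L^q(\Sigma)}$ is the weighted surface norm \eqref{Restriction Lq u computation}; by density we may take $f$ in a convenient dense class. The relation $\frac np-\frac{n-1}q=2$ is exactly the one under which this inequality is invariant under the rescaling $f(x)\mapsto f(\lambda x)$, $z\mapsto\lambda^{-2}z$, $\Sigma\mapsto\lambda^{-1}\Sigma$, so we reduce to $|z|\approx1$ at the cost of replacing $\Sigma$ by its dilates $|z|^{-1/2}\Sigma$; for $|z|\ge\delta$ these form a uniformly bounded, uniformly smooth family of hypersurface patches (they shrink and flatten as $|z|\to\infty$, which is the benign direction), so it is enough to prove the $|z|\approx1$ bound with constants uniform over this family.

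Write $-z=\tau^2$ with $\operatorname{Im}\tau\ge0$, $|\tau|\approx1$, and split $(-\Delta+z)^{-1}=E_z+R_z$, where $E_z$ collects the frequencies bounded away from the sphere $\{|\xi|=|\tau|\}$ (this is the whole operator when $\operatorname{Re}z\ge c>0$) and $R_z$ is supported in a fixed neighborhood of that sphere. The elliptic part is straightforward: $\langle\xi\rangle^2$ times the symbol of $E_z$ is a Mikhlin multiplier uniformly in $z$, so $E_z\colon L^p(\mathbb R^n)\to H^{2,p}(\mathbb R^n)$ with uniform norm, and composing with the Sobolev trace theorem---which carries $H^{2,p}(\mathbb R^n)$ into $L^q(\Sigma)$ exactly along the scaling line $\frac np-\frac{n-1}q=2$---handles $E_zf$ for every $(p,q)$ allowed by \eqref{Exponent condition for hypersurfaces in Rn}. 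Thus the entire difficulty lies in the ``near-sphere'' part $R_z$, which we decompose dyadically as $R_z=\sum_{j\ge0}S_j$, with $S_j$ the Fourier multiplier by a smooth bump on the shell $\{\,\bigl||\xi|^2-\tau^2\bigr|\sim 2^{-j}\,\}$ (radius $\approx1$, thickness $\approx2^{-j}$), on which $(|\xi|^2+z)^{-1}$ has size $\sim 2^j$; in the limiting-absorption case $j$ ranges over all of $\mathbb N$, so the pieces must be summed with a decaying factor in $j$.

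The heart of the matter is the bound $\|\mathcal R_\Sigma S_j\colon L^p(\mathbb R^n)\to L^q(\Sigma)\|\lesssim 2^{-\varepsilon(n,p,q)\,j}$, and here the curvature that does the work is that of the \emph{frequency} sphere, not of $\Sigma$---which is precisely why the theorem is valid for an arbitrary smooth hypersurface, hyperplanes included. Indeed $R_z$ is, modulo the elliptic part, essentially a restriction--extension operator for the sphere $\{|\xi|=|\tau|\}$ weighted by $(|\xi|^2-|\tau|^2)^{-1}$ (a Bochner--Riesz-type operator), and on the dual side $S_j^*(h\,d\mu)$ is, up to a harmless spatial truncation at scale $2^j$ and a lower-order principal-value piece, the composition of restricting $\widehat{h\,d\mu}$ to that frequency sphere and re-extending it. One therefore estimates it by the Stein--Tomas extension inequality for the frequency sphere (as in \cite[Corollary 2.2.2]{Sogge1993fourier}) together with an $L^2$-type restriction estimate for the Fourier transform of the $(n-1)$-dimensional measure $h\,d\mu$ on that sphere, and then---following Kenig, Ruiz, and Sogge \cite{KenigRuizSogge1987UniformSobolev} and Dos Santos Ferreira, Kenig, and Salo \cite{DSFKenigSalo2014Forum}---interpolates the resulting Stein--Tomas-line estimate against cheaper $L^1$--$L^\infty$ and trace-type bounds in order to reach the off-diagonal exponents $(p,q)$ and to produce the gain $2^{-\varepsilon j}$. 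Summing in $j$ and undoing the rescaling then completes the proof. The admissible range in \eqref{Exponent condition for hypersurfaces in Rn} is exactly the set of off-diagonal exponents on the scaling line for which this interpolation yields $\varepsilon>0$: the upper bound $q<\frac{2(n-1)}{n-3}$ is equivalent to the Kenig--Ruiz--Sogge-type condition $\frac1p-\frac12>\frac1{2n}$, the lower bound records where the restriction/extension estimates for the frequency sphere close up, and the switch of formula at $n=6$ reflects which of these competing estimates is binding in high dimensions.

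The step I expect to be the main obstacle is making this near-sphere estimate both sharp and uniform: controlling the interaction of $\Sigma$ with the frequency sphere near the ``equator'' where the conormal of $\Sigma$ is tangent to that sphere (where the naive $L^2$-restriction weight degenerates), optimizing the Kenig--Ruiz--Sogge-type interpolation so as to recover the exact range in \eqref{Exponent condition for hypersurfaces in Rn}, and bookkeeping the dyadic sum uniformly down to the limiting-absorption endpoint $j\to\infty$. By contrast, the elliptic part and the reduction to $|z|\approx1$ should be routine.
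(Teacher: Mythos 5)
Your outline is a Fourier--side, Kenig--Ruiz--Sogge-style scheme (elliptic part plus dyadic shells around the characteristic sphere, Stein--Tomas and interpolation), whereas the paper works in physical space with the fundamental solution $F_0$, a dyadic decomposition in $|x-y|$ at scale $|z|^{-1/2}$, a Sobolev-trace kernel bound for the near piece, exponential decay when $\arg z\in[-\pi/2,\pi/2]$, and, in the oscillatory regime, rescaled operators with phase $-|X-Y|$ that are estimated by the Greenleaf--Seeger oscillatory integral theorems (applicable to distance functions by \cite{Hu2009lp}) via \cite[\S 5]{BlairPark2024RestrictionOfSchrodingerEigenfunctions}. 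The difference of route would be fine, but your proposal has a genuine gap exactly at its center: the bound $\|\mathcal R_\Sigma S_j\|_{L^p(\mathbb R^n)\to L^q(\Sigma)}\lesssim 2^{-\varepsilon j}$ in the precise range \eqref{Exponent condition for hypersurfaces in Rn} is never derived. You reduce it to a Stein--Tomas extension estimate for the frequency sphere combined with an $L^2$ bound for the restriction of $\widehat{h\,d\mu}$ ($d\mu$ as in \eqref{Restriction Lq u computation}) to that sphere, but that second ingredient is a bilinear-type interaction between the physical hypersurface $\Sigma$ and the frequency sphere which degenerates where their conormals become tangent -- you flag this yourself as ``the main obstacle'' and leave it unresolved. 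Moreover, nothing in your sketch explains why the interpolation would close exactly at the thresholds $\frac{2(n-1)^2}{n^2-3n+4}$ (for $n=3,4,5$) and $\frac{2n^2-5n+4}{n^2-4n+8}$ (for $n\geq 6$); in the paper these lower endpoints, together with the intermediate exponent $q_0=\frac{2n^2-2n+1}{(n-1)(n-2)}$ carrying the $2^{-\varepsilon j}$ gain, come from specific fold-type oscillatory integral estimates of \cite{GreenleafSeeger1994fourier}, not from a Stein--Tomas argument, and there is no reason to expect your scheme to reproduce them without substantial additional input. Asserting that ``the admissible range is exactly where the interpolation yields $\varepsilon>0$'' describes the desired outcome rather than proving it.

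A secondary but nontrivial problem is the opening reduction to $|z|\approx 1$. Dilating $\Sigma$ by $|z|^{-1/2}$ does not produce a ``uniformly bounded, uniformly smooth family'': if $\Sigma$ is locally the graph of $h$, the dilate $r\Sigma$ is the graph of $H_r(X')=r\,h(X'/r)$, whose second and higher derivatives blow up like $r^{1-|\alpha|}$ as $r=|z|^{-1/2}\to 0$ (a shrunken sphere is more curved, not flatter). Since the uniformity of the trace/restriction constants over this family is essentially equivalent to the uniformity in $z$ that \eqref{Unif Sobolev est for hypsurf} asserts, this step cannot be waved through as routine; the paper avoids it by keeping $z$ fixed and building the scale $|z|^{-1/2}$ into the dyadic decomposition of the kernel, rescaling only each dyadic piece. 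The elliptic part and the limiting-absorption reduction in your write-up are fine, but as it stands the proposal establishes neither the key dyadic estimate nor the uniformity on which the rest of the argument rests.
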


If $(M, g)$ is a compact Riemannian manifold without boundary of dimension $n$, then $\Delta_g$ denotes the Laplace-Beltrami operator, or Laplacian in short, associated with the metric $g$. Unless otherwise specified, we write throughout this paper
\begin{align*}
    P=\sqrt{-\Delta_g}.
\end{align*}
If $e_\lambda$ is the eigenfunction of the Laplacian associated with the eigenvalue $\lambda$, we write
\begin{align}\label{e lambda eigfcn}
    -\Delta_g e_\lambda=\lambda^2 e_\lambda.
\end{align}
The eigenvalue $\lambda$ are discrete in that
\begin{align*}
    0\leq \lambda_0\leq \lambda_1 \leq \cdots,\quad \text{where } -\Delta_g e_j (x)=\lambda_j^2 e_j (x)
\end{align*}
so that the only limit point of the set of all eigenvalues is $+\infty$. Let $E_j$ be the projection onto the $j$-th eigenspace defined by
\begin{align*}
    E_j f(x)=\left(\int_M f(y) \overline{e_j (y)}\:dV_g \right) e_j (x).
\end{align*}
Given $\delta>0$, we set, as in \cite{DSFKenigSalo2014Forum},
\begin{align*}
    \Xi_\delta&=\{z\in \mathbb{C}\setminus \mathbb{R}_-: \mathrm{Re}\sqrt{z}\geq \delta\} \\
    &=\{z\in \mathbb{C}\setminus \mathbb{R}_-: (\mathrm{Im}\:z)^2\geq 4\delta^2 (\delta^2-\mathrm{Re}\sqrt{z})\}.
\end{align*}
If $(M, g)$ is a compact Riemannian manifold without boundary of dimension $n\geq 3$ and $\delta\in (0, 1)$, Dos Santos Ferreira, Kenig, and Salo \cite[Theorem 1.1]{DSFKenigSalo2014Forum} showed that there exists a uniform constant $C>0$ such that for all $u\in C^\infty (M)$ and all $z\in \Xi_\delta$, we have
\begin{align}\label{DKS resolvent estimate}
    \|u\|_{L^{\frac{2n}{n-2}}(M)}\leq C \|(\Delta_g-z)u\|_{L^{\frac{2n}{n+2}}(M)}.
\end{align}
Later, Bourgain, Shao, Sogge, and Yao \cite{BourgainShaoSoggeYao2015Resolvent} showed that there is a relationship between uniform spectral projection estimates and uniform resolvent estimates. Specifically, they showed that when $(M, g)$ is a compact Riemannian manifold without boundary of dimension $n\geq 3$ and $0<\epsilon(\lambda)\leq 1$ decreases monotonically to zero as $\lambda \to +\infty$ satisfying $\epsilon(2\lambda)\geq \frac{1}{2}\epsilon(\lambda)$ with $\lambda\geq 1$, one has the uniform spectral projection estimates
\begin{align*}
    \left\|\sum_{|\lambda-\lambda_j|\leq \epsilon(\lambda)} E_j f \right\|_{L^{\frac{2n}{n-2}}(M)}\leq C \epsilon(\lambda) \lambda \|f\|_{L^{\frac{2n}{n+2}}(M)},\quad \lambda\geq 1,
\end{align*}
if and only if one has the uniform resolvent estimates
\begin{align*}
    & \|u\|_{L^{\frac{2n}{n-2}}(M)}\leq C\|(\Delta_g+(\lambda+i\mu)^2)u\|_{L^{\frac{2n}{n+2}}(M)}, \\
    & \lambda, \mu\in \mathbb{R},\; \lambda\geq 1,\; |\mu|\geq \epsilon(\lambda),\; u\in C^\infty (M).
\end{align*}
For more details about the relationship between spectral projection estimates and resolvent estimates, we refer the reader to Cuenin \cite{Cuenin2024FromSpectralCluster}, and references therein. In the proof of this result, the estimate \eqref{DKS resolvent estimate} was revisited (see \cite[Theorem 2.5]{BourgainShaoSoggeYao2015Resolvent}). Our next result is a restriction analogue of \cite[Theorem 1.1]{DSFKenigSalo2014Forum} and \cite[Theorem 2.5]{BourgainShaoSoggeYao2015Resolvent} as follows.

\begin{theorem}\label{Thm Manifold resolvent restriction estimates for hypersurface}
    Let $(M, g)$ be a compact Riemannian manifold without boundary of dimension $n\geq 3$. Given $\delta>0$, we set
    \begin{align}\label{z lambda mu setup}
        z=\lambda+i\mu,\quad \lambda, \mu \in \mathbb{R},\quad \lambda\geq \delta,\; |\mu|\geq \delta.
    \end{align}
    Let $\Sigma$ be a hypersurface of $M$. If we assume
    \begin{align}\label{(p, q) condition for hypsurf in mfld}
        \frac{n}{p}-\frac{n-1}{q}=2,\; 3\leq n\leq 12,\text{ and } \frac{2n}{n-1}\leq q\leq \frac{2(n-1)(n+1)}{n^2-n-4},
    \end{align}
    then
    \begin{align}\label{Manifold resolvent est for hypsurf}
        \|u\|_{L^q (\Sigma)}\leq C_\delta \|(\Delta_g+z^2)u\|_{L^p (M)}.
    \end{align}
\end{theorem}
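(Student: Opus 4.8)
The plan is to reduce the manifold estimate to the Euclidean one from Theorem 1.1 via a partition of unity and a parametrix/local-coordinates argument, following the scheme in Dos Santos Ferreira–Kenig–Salo and Bourgain–Shao–Sogge–Yao, but carrying the $L^q(\Sigma)$-norm on the left throughout. First, I would write $z = \lambda + i\mu$ as in \eqref{z lambda mu setup} and normalize: since $\lambda \geq \delta$ and $|\mu| \geq \delta$, one may rescale by $\lambda$, so that after setting $v(x) = u(x/\lambda)$ (in local coordinates) the operator $\Delta_g + z^2$ becomes, to leading order, $\lambda^2(\Delta_{g_\lambda} + (1 + i\mu/\lambda)^2)$ with a spectral parameter uniformly bounded away from the positive real axis in the relevant regime; the high-frequency part is handled by the elliptic parametrix and the low-frequency part is trivial. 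The gain of two derivatives in $\frac{n}{p} - \frac{n-1}{q} = 2$ together with the scaling is what makes the constant uniform in $z$.

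Next I would decompose $u = \sum_j \chi_j u$ using a finite partition of unity subordinate to coordinate charts $\{U_j\}$ of $M$, chosen fine enough that $\Sigma \cap U_j$ is a graph and $g$ is a small perturbation of the Euclidean metric on $U_j$. On each chart the operator $\Delta_g + z^2$ is, after a change of variables, of the form $-\Delta + z^2 + (\text{lower order and perturbative second order terms})$, and one can build a local right parametrix $T_j$ so that $(\Delta_g + z^2) T_j = \mathrm{Id} + R_j$ with $R_j$ a smoothing-type remainder that is small in the appropriate operator norm (or at least bounded, to be absorbed after summing a Neumann series over a finite cover, using the pseudolocal structure). The main term is then estimated by Theorem 1.1: for the Euclidean model $(-\Delta + \zeta)$ with $|\zeta| \geq \delta'$ one has $\|w\|_{L^q(\Sigma_j)} \leq C\|(-\Delta+\zeta)w\|_{L^p(\mathbb{R}^n)}$ provided $(p,q)$ satisfies \eqref{Exponent condition for hypersurfaces in Rn}. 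One must check that the hypothesis \eqref{(p, q) condition for hypsurf in mfld}, namely $3 \leq n \leq 12$ and $\frac{2n}{n-1} \leq q \leq \frac{2(n-1)(n+1)}{n^2-n-4}$, is contained in the Euclidean range; this is a direct (if tedious) comparison of the two intervals for $q$ after accounting for the same relation $\frac{n}{p}-\frac{n-1}{q}=2$, and the dimensional restriction $n \leq 12$ is presumably exactly what is needed for the nonemptiness of the resulting interval — this accounting should be recorded as a short lemma.

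The remaining ingredients are: controlling the restriction-to-$\Sigma$ norm of the parametrix output, which uses that the Stein–Tomas-type mechanism behind Theorem 1.1 is stable under the smooth perturbation of the hypersurface and the metric in a coordinate chart (so the compactly supported measure $d\mu = \tilde\beta\, d\sigma$ structure of \eqref{Compact induced measure} is preserved); and handling the remainder terms $R_j u$, for which one needs a fixed-frequency or Bernstein-type bound showing that lower-order contributions and the smoothing remainder map $L^p(M) \to L^q(\Sigma)$ with a constant that does not grow in $z$ — here the ellipticity of $\Delta_g + z^2$ for $|z|$ large (which gives, e.g., $H^{2,p} \to L^q$ trace bounds via Sobolev embedding on $\Sigma$) closes the argument, and for $|z|$ bounded below but the operator possibly close to its spectrum one localizes in frequency around $\lambda$ and uses the $|\mu| \geq \delta$ hypothesis as the analogue of the $\Xi_\delta$ condition. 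I expect the \textbf{main obstacle} to be the parametrix remainder step: unlike the classical uniform Sobolev setting, the output must be measured in the trace norm $L^q(\Sigma)$ rather than a global $L^q(M)$ norm, so one cannot simply quote standard $L^p \to L^q$ resolvent bounds for the remainder and must instead set up a self-contained perturbative estimate (likely an interpolation between a trivial $L^2(M) \to L^2(\Sigma)$ trace bound with a loss of derivatives and the endpoint estimate of Theorem 1.1) to absorb $R_j$; managing this while keeping all constants uniform over $z$ in the stated region is the technical heart of the proof.
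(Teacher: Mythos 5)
Your proposal has a genuine gap, and it lies precisely at the point you flag as the ``main obstacle.'' A local parametrix plus Neumann-series absorption of the remainder cannot close on a compact manifold in the regime of the theorem, where $\lambda\to\infty$ while $|\mu|\geq\delta$ is only bounded below: the error of any small-time/frozen-coefficient parametrix is not small (uniformly in $\lambda$) in the $L^p(M)\to L^q(\Sigma)$ operator norms you need, because near the spectrum the resolvent sees the global structure of the spectrum, not just local elliptic information. The paper's proof instead writes $(-\Delta_g-(\lambda+i\mu)^2)^{-1}=\frac{i}{\lambda+i\mu}\int_0^\infty e^{i\lambda t}e^{-\mu t}\cos(tP)\,dt$ and splits it by a time cutoff into a local piece $S_\lambda$ (small $t$, handled by the variable-coefficient local resolvent restriction estimate of Blair--Park, i.e.\ the Hadamard parametrix together with the Hu/Greenleaf--Seeger oscillatory integral bounds, not by freezing coefficients and quoting Theorem \ref{Thm Euclidean resolvent restriction estimates for hypersurfaces}) and a tail $W_\lambda$ whose spectral multiplier decays like $\lambda^{-1}(1+|\lambda-\tau|)^{-N}$. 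The tail is the part your scheme has no substitute for: it is controlled by unit-band spectral projector bounds, composing Sogge's $L^p(M)\to L^2(M)$ cluster estimate with the Burq--G\'erard--Tzvetkov/Hu $L^2(M)\to L^q(\Sigma)$ restriction estimate, and summing $\sum_k k\,\lambda^{-1}(1+|\lambda-k|)^{-3}\lesssim 1$. No interpolation between a trivial $L^2$ trace bound and Theorem \ref{Thm Euclidean resolvent restriction estimates for hypersurfaces}, as you suggest, produces the needed factor of $k$ in the unit-band bound; that is exactly the content of the paper's Lemma on $\eta_k(P)$.

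This also shows that your accounting of the exponent range is off. The interval $\frac{2n}{n-1}\leq q\leq \frac{2(n-1)(n+1)}{n^2-n-4}$ is nonempty for every $n\geq 3$, so the restriction $3\leq n\leq 12$ in \eqref{(p, q) condition for hypsurf in mfld} is not a nonemptiness condition coming from comparison with the Euclidean range \eqref{Exponent condition for hypersurfaces in Rn}. It comes from the tail estimate: with $\frac{n}{p}-\frac{n-1}{q}=2$ one needs $\mu(q,n-1)+\sigma(p')\leq 1$ so that the composed cluster bound is $\lesssim k$ and the sum over $k$ converges, and this inequality forces both the upper bound $q\leq\frac{2(n-1)(n+1)}{n^2-n-4}$ and $3\leq n\leq 12$. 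Separately, your plan to reduce the local piece to the literal Euclidean operator by freezing coefficients is itself problematic at high frequency, since the second-order perturbation terms are not small relative to $\Delta+z^2$ uniformly in $\lambda$ unless the charts shrink with $\lambda$, which destroys uniformity of the constants; the correct route is the variable-coefficient local estimate already established via the Riemannian distance phase.
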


\begin{remark}
    The arguments in Theorem \ref{Thm Euclidean resolvent restriction estimates for hypersurfaces}-\ref{Thm Manifold resolvent restriction estimates for hypersurface} follow from the arguments in \cite{BourgainShaoSoggeYao2015Resolvent}, \cite{DSFKenigSalo2014Forum}, and the computation on hypersurfaces in \cite[\S 5]{BlairPark2024RestrictionOfSchrodingerEigenfunctions}. If we use the arguments in \cite{BourgainShaoSoggeYao2015Resolvent} and \cite{DSFKenigSalo2014Forum} with the computation on codimension $2$ submanifolds in \cite[\S 6]{BlairPark2024RestrictionOfSchrodingerEigenfunctions}, we can also obtain the analogous results of Theorem \ref{Thm Euclidean resolvent restriction estimates for hypersurfaces}-\ref{Thm Manifold resolvent restriction estimates for hypersurface} on codimension $2$ submanifolds. In \eqref{Restriction Lq u computation}, we considered the Lebesgue measure on a hypersurface, but we can also consider the induced Lebesgue measure on an $(n-2)$-dimensional submanifold $\Sigma$, and so, similarly we can compute $\|u\|_{L^q (\Sigma)}$. In this paper, for simplicity we focus only on the hypersurface cases, and we only state the analogous results of Theorem \ref{Thm Euclidean resolvent restriction estimates for hypersurfaces}-\ref{Thm Manifold resolvent restriction estimates for hypersurface} for codimension $2$ submanifolds as follows:
    \begin{theorem}
        Let $n\geq 3$. Suppose $\Sigma$ is an $(n-2)$-dimensional smooth submanifold of $\mathbb{R}^n$. If $\frac{n}{p}-\frac{n-2}{q}=2,\; n\geq 3,\; \text{ and }\; \frac{2(n-2)^2}{n^2-5n+8}<q<\frac{2(n-2)}{n-3}$, then given $\delta>0$, for all $z\in \mathbb{C}$ with $|z|\geq \delta>0$,
        \begin{align}\label{Unif Sobolev est for codim 2}
        \|u\|_{L^q (\Sigma)}\leq C_\delta \|(-\Delta+z)u\|_{L^p (\mathbb{R}^n)},\quad u\in H^{2, p} (\mathbb{R}^n),
        \end{align}
        where $\|u\|_{L^q(\Sigma)}$ is computed as in \eqref{Restriction Lq u computation}.
    \end{theorem}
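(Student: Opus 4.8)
The plan is to run exactly the argument behind Theorem~\ref{Thm Euclidean resolvent restriction estimates for hypersurfaces} --- itself an adaptation of \cite{KenigRuizSogge1987UniformSobolev} and \cite{DSFKenigSalo2014Forum} --- but with the oscillatory-integral estimates for hypersurfaces drawn from \cite[\S 5]{BlairPark2024RestrictionOfSchrodingerEigenfunctions} replaced by their codimension-$2$ analogues in \cite[\S 6]{BlairPark2024RestrictionOfSchrodingerEigenfunctions}; the only substantive change is the accounting of exponents, which is why the admissible interval for $q$ becomes $\big(\tfrac{2(n-2)^2}{n^2-5n+8},\tfrac{2(n-2)}{n-3}\big)$. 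First I would reduce to a single frequency scale: since $\tfrac np-\tfrac{n-2}{q}=2$, the inequality \eqref{Unif Sobolev est for codim 2} is invariant under the dilations $u(x)\mapsto u(\lambda x)$, $\Sigma\mapsto\lambda^{-1}\Sigma$, $z\mapsto\lambda^{-2}z$, so it suffices to prove it with a uniform constant for $|z|=1$. Writing the solution operator as the Fourier multiplier $m_z(\xi)=(|\xi|^2+z)^{-1}$ and using density of Schwartz functions in $H^{2,p}(\mathbb{R}^n)$, the goal is the uniform bound $\|(-\Delta+z)^{-1}f\|_{L^q(\Sigma)}\lesssim\|f\|_{L^p(\mathbb{R}^n)}$, with $\|\cdot\|_{L^q(\Sigma)}$ interpreted as in \eqref{Restriction Lq u computation}, for all $z$ on the unit circle.

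Next I would split $m_z=m_z^{\mathrm{ell}}+\sum_{j\ge 1}m_z^{(j)}$ by a Littlewood--Paley partition of unity about the characteristic sphere $\{|\xi|^2=-\mathrm{Re}\,z\}$, with $m_z^{(j)}$ supported on the annulus $\big\{\,\big||\xi|^2+\mathrm{Re}\,z\big|\sim 2^{-j}\,\big\}$ and $m_z^{\mathrm{ell}}$ supported away from it. On $\mathrm{supp}\,m_z^{\mathrm{ell}}$ one has $|m_z^{\mathrm{ell}}(\xi)|\lesssim\langle\xi\rangle^{-2}$ with the attendant symbol bounds, so that piece factors through $\langle D\rangle^{-2}\colon L^p\to H^{2,p}$ composed with a zeroth-order multiplier; after the usual dyadic frequency decomposition, restriction to $\Sigma$ is then handled by the Stein--Tomas-type inequality for arbitrary smooth $(n-2)$-dimensional submanifolds furnished by \cite[\S 6]{BlairPark2024RestrictionOfSchrodingerEigenfunctions}, and the pieces are summed. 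For each singular annulus, $|m_z^{(j)}(\xi)|\lesssim\min\{2^{j},|\mathrm{Im}\,z|^{-1}\}\le 2^{j}$ while $2^{-j}m_z^{(j)}$ is a smooth bump adapted to a $2^{-j}$-neighborhood of the unit sphere, so the whole estimate reduces to showing
\[
\Big\|\int_{\mathbb{R}^n}e^{i\langle\,\cdot\,,\xi\rangle}a_j(\xi)\,\widehat f(\xi)\,d\xi\Big\|_{L^q(\Sigma)}\lesssim 2^{-j\gamma}\,\|f\|_{L^p(\mathbb{R}^n)},\qquad\gamma>1,
\]
uniformly in $j$, where $a_j$ is supported on that annulus with $|a_j|\le 1$ and the natural derivative bounds. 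Summing $\sum_{j\ge1}2^{j}\cdot 2^{-j\gamma}<\infty$ then closes the argument, and the endpoints of the $q$-interval are precisely the thresholds at which the gain $\gamma$ degenerates to $1$.

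The main obstacle --- and the only genuinely new ingredient --- is the uniform bound in the last display. This is exactly what the codimension-$2$ computation in \cite[\S 6]{BlairPark2024RestrictionOfSchrodingerEigenfunctions} supplies: stationary phase on the sphere gives, for the kernel $K_j(x-y)=\int e^{i\langle x-y,\xi\rangle}a_j(\xi)\,d\xi$ with $x\in\Sigma$, $y\in\mathbb{R}^n$, the pointwise bound $|K_j(x-y)|\lesssim 2^{-j}\langle x-y\rangle^{-(n-1)/2}$ out to distance $\sim 2^{j}$ (with rapid decay beyond), and a dyadic decomposition in $|x-y|$ combined with $TT^{*}$, Young's inequality, and interpolation in the mixed-dimensional setting $\Sigma\times\mathbb{R}^n$ then yields the $L^p(\mathbb{R}^n)\to L^q(\Sigma)$ decay, with $\gamma>1$ precisely in the stated range of $q$. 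The delicate point is that no curvature is assumed on $\Sigma$, so one is forced to use the curvature-free estimates of \cite[\S 6]{BlairPark2024RestrictionOfSchrodingerEigenfunctions}; this is also where the difference from the codimension-$1$ range in \eqref{Exponent condition for hypersurfaces in Rn} originates. The remainder is a routine transcription of the arguments in \cite{KenigRuizSogge1987UniformSobolev} and \cite{DSFKenigSalo2014Forum}.
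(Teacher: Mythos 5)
Your overall architecture (reduce to the multiplier $(|\xi|^2+z)^{-1}$, decompose into dyadic annuli about the characteristic sphere, beat the multiplier height $2^j$ with a gain $2^{-j\gamma}$, $\gamma>1$) is the Kenig--Ruiz--Sogge frequency-side route, whereas the paper's proof of this statement is simply to rerun \S\ref{S: Proof of Thm for Euclidean case} verbatim with the codimension-$2$ estimates of \cite[\S 6]{BlairPark2024RestrictionOfSchrodingerEigenfunctions} in place of the \S 5 ones: a physical-space dyadic decomposition of the explicit fundamental solution $F_0$ at the scale $|z|^{-1/2}$, with the pieces at scale $2^j|z|^{-1/2}$ rescaled to unit-scale oscillatory integral operators with phase $-|X-Y|$ and estimated via Greenleaf--Seeger/Hu. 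Your route could in principle be made to work, but as written it has two genuine gaps. First, the reduction to $|z|=1$ by dilation is not legitimate here: the left-hand side of \eqref{Unif Sobolev est for codim 2} is an $L^q$ norm over a \emph{fixed} $\Sigma$ with the fixed compactly supported density $\tilde\beta\,d\sigma$ of \eqref{Restriction Lq u computation}, and the constants in the submanifold estimates you invoke are allowed to depend on $\Sigma$ and $\tilde\beta$. Dilating replaces $\Sigma$ by $|z|^{1/2}\Sigma$ with a cutoff of diameter $\sim|z|^{1/2}$, so you would need the annulus-to-$\Sigma$ estimates with constants uniform over this noncompact family, which does not follow from quoting the fixed-submanifold results of \cite[\S 6]{BlairPark2024RestrictionOfSchrodingerEigenfunctions} as a black box. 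The paper avoids this by never rescaling away $|z|$; it carries $|z|$ through the decomposition (note the explicit $|z|$-dependence in \eqref{Estimate for critical exp}).

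Second, and more seriously, the pivotal claim --- that the unit-height bump on the $2^{-j}$-annulus maps $L^p(\mathbb{R}^n)\to L^q(\Sigma)$ with norm $\lesssim 2^{-j\gamma}$, $\gamma>1$, throughout the stated range of $q$ --- is asserted rather than proved, and the mechanism you describe (pointwise stationary-phase bound $|K_j|\lesssim 2^{-j}\langle x-y\rangle^{-(n-1)/2}$, then $TT^\ast$, Young, interpolation) cannot deliver it: taking absolute values of the kernel discards exactly the oscillation $e^{\pm i|x-y|}$ on which the endpoint-type estimates rest. The paper makes this point explicitly in the hypersurface case: after \eqref{e to the sqrt of z times |x-y|} it notes that size bounds alone are ``not enough,'' and the admissible range in \eqref{Exponent condition for hypersurfaces in Rn} (hence its codimension-$2$ analogue here) comes from the oscillatory integral theorems of \cite{GreenleafSeeger1994fourier} via Hu's verification \cite{Hu2009lp} that the distance-function phase satisfies their hypotheses. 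Moreover, what \cite[\S 6]{BlairPark2024RestrictionOfSchrodingerEigenfunctions} supplies are bounds for unit-scale oscillatory operators of that distance-function form (the analogues of Lemmas \ref{Lemma Lp0 to Lq0 estimate for hypersurfaces in Rn}--\ref{Lemma lower q uniform estimate for hypersurfaces in Rn}), not bounds for your frequency-annulus trace operators; to use them you must first write $K_j$ by stationary phase as $e^{\pm i|x-y|}$ times a symbol, decompose dyadically in $|x-y|\lesssim 2^j$, rescale each piece to unit scale, apply those lemmas, and sum --- at which point you have essentially reconstructed the paper's argument, and the strictness of the gain $\gamma>1$ (needed since the multiplier height contributes $2^{j}$, and the sum is log-critical at the endpoints) is precisely the content of the interpolation step there. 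A minor omission: for $z\in\mathbb{R}_-$ (so $\mathrm{Im}\,z=0$ and the annuli reach the singular sphere) you need the limiting argument the paper borrows from \cite{KenigRuizSogge1987UniformSobolev}.
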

    
    \begin{theorem}
        Let $(M, g)$ be a compact Riemannian manifold without boundary of dimension $n\geq 3$. Given $\delta>0$, we set $z=\lambda+i\mu$ as in \eqref{z lambda mu setup}. Let $\Sigma$ be an $(n-2)$-dimensional submanifold of $M$. If we assume $\frac{n}{p}-\frac{n-2}{q}=2,\; 3\leq n\leq 4,\text{ and } \frac{2(n-2)^2}{n^2-5n+8}<q\leq \frac{2(n+1)(n-2)}{n^2-n-4}$, then
        \begin{align}\label{Manifold resolvent est for codim 2}
            \|u\|_{L^q (\Sigma)}\leq C_\delta \|(\Delta_g+z^2)u\|_{L^p (M)}.
        \end{align}
    \end{theorem}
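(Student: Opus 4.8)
The plan is to adapt the proof of Theorem~\ref{Thm Manifold resolvent restriction estimates for hypersurface}, replacing the oscillatory-integral computation on hypersurfaces from \cite[\S 5]{BlairPark2024RestrictionOfSchrodingerEigenfunctions} by its codimension~$2$ counterpart in \cite[\S 6]{BlairPark2024RestrictionOfSchrodingerEigenfunctions}; the overall architecture is that of \cite{DSFKenigSalo2014Forum} and \cite{BourgainShaoSoggeYao2015Resolvent}. After the usual reductions one may assume $\lambda$ is large and $\delta \le |\mu| \le \lambda$, the complementary ranges being easier: when $|\mu| \gtrsim \lambda$ (equivalently $\mathrm{Re}\,z^2 \lesssim 0$) the operator $(\Delta_g + z^2)^{-1}$ is an order $-2$ pseudodifferential operator with symbol norms uniform in $z$, and \eqref{Manifold resolvent est for codim 2} then follows from Bernstein's inequality and the critical Sobolev trace embedding $H^{2,p}(M) \hookrightarrow L^q(\Sigma)$, valid for $\tfrac{n}{p} - \tfrac{n-2}{q} \le 2$, together with \eqref{Restriction Lq u computation}. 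In the remaining range I would use, for $\mathrm{Im}\,z = \mu > 0$,
\begin{align*}
    (\Delta_g + z^2)^{-1} = \frac{-i}{z}\int_0^\infty e^{itz}\cos(tP)\,dt
\end{align*}
(with the reflected formula for $\mu < 0$); since $|e^{itz}| = e^{-t\mu}$ with $|\mu| \ge \delta$, the integral converges absolutely and is effectively truncated to $|t| \lesssim \delta^{-1}$. Inserting a smooth partition of unity $1 = \rho(t) + (1 - \rho(t))$ with $\rho \equiv 1$ near $0$ and $\mathrm{supp}\,\rho \subset \{|t| < \varepsilon_0\}$, $\varepsilon_0$ below the injectivity radius of $M$, splits the resolvent into a \emph{local} and a \emph{global} piece.

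For the local piece I would replace $\cos(tP)$, $|t| < \varepsilon_0$, by its Hadamard parametrix; modulo a smoothing remainder this becomes an oscillatory integral operator whose phase, in geodesic normal coordinates, has the Euclidean form, so that after integrating against $e^{itz}$ and reassembling over charts one obtains, locally, an operator comparable to the free resolvent $(-\Delta + z^2)^{-1}$ with $|z^2| = \lambda^2 + \mu^2$ large. Restricting to $\Sigma$, the local piece is therefore controlled by the Euclidean codimension~$2$ uniform Sobolev restriction estimate \eqref{Unif Sobolev est for codim 2} --- equivalently, by the codimension~$2$ oscillatory-integral estimates of \cite[\S 6]{BlairPark2024RestrictionOfSchrodingerEigenfunctions}. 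This step works for every $n \ge 3$ throughout the range of $q$ in \eqref{Unif Sobolev est for codim 2}, and it is exactly what produces the \emph{lower} bound $q > \tfrac{2(n-2)^2}{n^2 - 5n + 8}$; it is not what forces $3 \le n \le 4$.

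For the global piece $\varepsilon_0 \le |t| \lesssim \delta^{-1}$ there is no local parametrix. Instead one checks that its symbol $m_z(\tau) = \frac{-i}{z}\int_0^\infty (1 - \rho(t))\,e^{itz}\cos(t\tau)\,dt$ satisfies, uniformly over the relevant range of $z$, the bound $|m_z(\tau)| \le C_{N,\delta}\,\lambda^{-1}(1 + |\tau - \lambda|)^{-N}$ for every $N$; that is, it is essentially a bump of height $\lambda^{-1}$ and unit width centered at the frequency $\lambda$. Summing this bump over unit-width spectral bands, the global piece is controlled once one has the restriction estimate for unit-width spectral clusters on the $(n-2)$-dimensional submanifold $\Sigma$,
\begin{align*}
    \bigl\|\chi_{[\lambda,\lambda+1]}(P)\,f\bigr\|_{L^q(\Sigma)} \le C\,\lambda\,\|f\|_{L^p(M)},\qquad \lambda \ge 1,
\end{align*}
which is the Burq--G\'erard--Tzvetkov type eigenfunction restriction bound, in the form established in \cite[\S 6]{BlairPark2024RestrictionOfSchrodingerEigenfunctions} (combined with the $L^p(M) \to L^2(M)$ spectral cluster estimates of Sogge). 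The full range of $q$ in \eqref{Manifold resolvent est for codim 2} would then be obtained by interpolating this estimate --- which is delicate precisely at the Stein--Tomas-type endpoint $q = \tfrac{2(n+1)(n-2)}{n^2 - n - 4}$ --- against easier bounds for smaller $q$.

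The hardest part, and the main obstacle, is establishing that unit-band restriction estimate up to and including the endpoint $q = \tfrac{2(n+1)(n-2)}{n^2 - n - 4}$ on a general compact manifold; its power $\lambda^{1}$ is dictated by $\tfrac{n}{p} - \tfrac{n-2}{q} = 2$ and is necessary, since testing \eqref{Manifold resolvent est for codim 2} on eigenfunctions near frequency $\lambda$ with $\mathrm{Im}\,z \sim \delta$ yields exactly $\|e_\lambda\|_{L^q(\Sigma)} \lesssim \lambda\,\|e_\lambda\|_{L^p(M)}$. The availability of this endpoint estimate with the sharp power is what confines the codimension~$2$ theorem to $3 \le n \le 4$: at $n = 5$ the two exponents $\tfrac{2(n-2)^2}{n^2 - 5n + 8}$ and $\tfrac{2(n+1)(n-2)}{n^2 - n - 4}$ both equal $\tfrac{9}{4}$, so no admissible $q$ remains, and for $n \ge 5$ no range of $q$ can simultaneously accommodate the Euclidean model estimate and the manifold spectral-cluster restriction bound --- the same mechanism that caps the hypersurface case of Theorem~\ref{Thm Manifold resolvent restriction estimates for hypersurface} at $n = 12$.
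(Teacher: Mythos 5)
Your proposal follows the paper's own route for \eqref{Manifold resolvent est for codim 2}: the cosine-transform representation of the resolvent, the cutoff $\rho$ splitting it into a short-time piece controlled by the Hadamard-parametrix/Euclidean codimension-two estimates of \cite[\S 6]{BlairPark2024RestrictionOfSchrodingerEigenfunctions} and a tail whose multiplier is $O_N\bigl(\lambda^{-1}(1+|\tau-\lambda|)^{-N}\bigr)$ and is summed over unit spectral bands; your explanation of the constraint $3\le n\le 4$ (the Euclidean lower exponent and the spectral-cluster upper exponent collide at $n=5$, both equal to $9/4$) is exactly the mechanism, parallel to the $n\le 12$ cap in the hypersurface case. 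The preliminary reduction to $|\mu|\lesssim\lambda$ via pseudodifferential calculus is harmless but unnecessary: both pieces are estimated uniformly for all $\lambda,|\mu|\ge 1$ after rescaling $\delta=1$.

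Two points deserve correction of emphasis. First, the unit-band bound $\bigl\|\sum_{\lambda_j\in[k-1,k)}E_jf\bigr\|_{L^q(\Sigma)}\lesssim k\,\|f\|_{L^p(M)}$ is not a delicate Stein--Tomas-type endpoint that must be "established"; as in the proof of Lemma \ref{Lemma Truncated resolvent est for hypsurf}, it is the immediate composition of Sogge's bound $\|\chi_k\|_{L^p(M)\to L^2(M)}\lesssim k^{\sigma(p')}$ with the Burq--G\'erard--Tzvetkov restriction bound $\|\chi_k\|_{L^2(M)\to L^q(\Sigma)}\lesssim k^{\frac{n-1}{2}-\frac{n-2}{q}}$ for $(n-2)$-dimensional submanifolds \cite{BurqGerardTzvetkov2007restrictions}; on the critical line $\frac np-\frac{n-2}q=2$ with $q\le\frac{2(n+1)(n-2)}{n^2-n-4}$ (equivalently $p\le\frac{2(n+1)}{n+3}$) one has $\sigma(p')=\frac{n-2}{q}-\frac{n-3}{2}$, so the exponents sum to exactly $1$ and no interpolation is needed for the tail. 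Second, "easier bounds for smaller $q$" is backwards for the tail piece: along the critical line, decreasing $q$ increases $\sigma(p')$, while the restriction factor cannot improve on its $q=2$ value $k^{1/2}$ (with a logarithmic loss there, since $\dim\Sigma=n-2$), so the composed exponent exceeds $1$ once $q<2$. This is immaterial for $n=4$, where the stated window is $2<q\le\frac52$, but for $n=3$ the window $1<q\le4$ dips below $2$, and that sub-range is not reached by this argument; this caveat applies equally to the paper's one-line justification, so in your write-up either supply a separate argument for $n=3$, $q\le 2$, or restrict the tail estimate to $q>2$ and state the theorem accordingly.
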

    
\end{remark}

For notations, the constants $C$ are uniform with respect to the frequency $\lambda$, but may depend on manifolds $M$, submanifold $\Sigma$, a smooth bump function $\tilde{\beta}$ used in \eqref{Restriction Lq u computation}, and exponent $p, q$. The constants $C$ may also be different line by line, but each of the constants are different only up to some uniform constant. We write $A\lesssim B$ when there is a uniform constant $C>0$ such that $A\leq CB$. We also write $A\approx B$, if $A\lesssim B$ and $B\lesssim A$.

\subsection*{Outline of the work}
In \S \ref{S: Proof of Thm for Euclidean case}, we prove Theorem \ref{Thm Euclidean resolvent restriction estimates for hypersurfaces}. To change resolvent problems to oscillatory integral problems, we shall use the tools in \cite{KenigRuizSogge1987UniformSobolev}, \cite{Sogge1988concerning}, and \cite{DSFKenigSalo2014Forum}. The argument in Hu \cite{Hu2009lp} utilizing the results in Greenleaf and Seeger \cite{GreenleafSeeger1994fourier} as in \cite{BlairPark2024RestrictionOfSchrodingerEigenfunctions} will be helpful to estimate oscillatory integral operators. In \S \ref{S: Proof of Thm for manifolds}, we prove Theorem \ref{Thm Manifold resolvent restriction estimates for hypersurface}. Estimating oscillatory integral operators will be quite similar to what we will do in \S \ref{S: Proof of Thm for Euclidean case} in the sense of \cite{Hu2009lp} and \cite{GreenleafSeeger1994fourier}. The difference is that we will use the wave kernel expression developed by Bourgain, Shao, Sogge, and Yao \cite{BourgainShaoSoggeYao2015Resolvent} to change our problem to oscillatory integral problems.

\subsection*{Acknowledgement}
The authors thank Christopher Sogge for helpful discussions about the uniform Sobolev estimates.

\section{Proof of Theorem \ref{Thm Euclidean resolvent restriction estimates for hypersurfaces}}\label{S: Proof of Thm for Euclidean case}
In this section, to prove Theorem \ref{Thm Euclidean resolvent restriction estimates for hypersurfaces}, we follow the arguments in \cite{DSFKenigSalo2014Forum} (see also \cite{KenigRuizSogge1987UniformSobolev}), but instead of using Stein's oscillatory integral theorem for the Carleson-Sj\"olin condition (cf. \cite[\S 2.2]{Sogge1993fourier}, \cite[\S 2]{DSFKenigSalo2014Forum}, \cite{Stein1993HarmonicAnalysisRealvariable}, and so on), we shall use uniform Sobolev trace type estimates from \cite{BlairPark2024RestrictionOfSchrodingerEigenfunctions} which are obtained from the results in \cite{GreenleafSeeger1994fourier} and \cite{Hu2009lp}.

We consider the principal branch of the square root $\sqrt{z}$ on $\mathbb{C}\setminus \mathbb{R}_-$, with the convention $\arg z\in (-\pi, \pi)$ for $z\in \mathbb{C}\setminus \mathbb{R}_-$. We remark that by a limiting argument (cf. \cite[p.338]{KenigRuizSogge1987UniformSobolev}), we can focus only on $z$ satisfying $|z|\geq \delta>0$ and $\mathrm{Im}(z)\not=0$. By this, we may be able to circumvent the branch cut which corresponds to the cases where $-z$ is in the spectrum of $\Delta$, and so, we do not need to consider the branch cut of $\sqrt{z}$. Indeed, recall that, for $f\in \mathcal{S}(\mathbb{R}^n)$, the work of \cite{KenigRuizSogge1987UniformSobolev} showed \eqref{KRS estimate} by proving
\begin{align}\label{KRS (2.16)}
    \left\|\left(\frac{\hat{f}(\xi)}{|\xi|^2-z} \right)^\vee \right\|_{L^{\frac{2n}{n-2}}(\mathbb{R}^n)}\lesssim \|f\|_{L^{\frac{2n}{n+2}}(\mathbb{R}^n)},\quad |z|\geq \delta>0,\quad \mathrm{Im}(z)\not=0,
\end{align}
and so, in our computation as well, it is natural to consider the distributional formula
\begin{align}\label{D' formula}
    (|\xi|^2-(\lambda+i0)^2)^{-1}=\mathrm{p.v.}(|\xi|^2-\lambda^2)^{-1}+i\pi \delta_0 (|\xi|^2-\lambda^2),
\end{align}
where $\delta_0 (|\xi|^2-\lambda^2)$ is a pull-back of a distribution (cf. \cite[A.4]{Sogge2014hangzhou}, \cite[\S 7.2]{FriedlanderJoshi1998IntroToDistributions}, and so on). By \eqref{D' formula}, for $z\in \mathbb{R}$, if we sum the limits of $(|\xi|^2-(z\pm i\epsilon)^2)^{-1}$ in the upper and lower half planes by taking $\epsilon\to 0+$, then $i\pi \delta_0(|\xi|^2-z^2)$ can be canceled out. Indeed, we may have that
\begin{align*}
    \sum_\pm (|\xi|^2-(z \pm i0))^{-1}=2\mathrm{p.v.} (|\xi|^2-z^2)^{-1}.
\end{align*}
By this, it is sufficient to assume $\mathrm{Im}(z)\not=0$ in our proof of \eqref{Unif Sobolev est for hypsurf} as well.

With this in mind, we define $F_0$ by
\begin{align*}
    F_0 (|w|, z)=(2\pi)^{-n} \int_{\mathbb{R}^n} \frac{e^{iw\cdot \xi}}{|\xi|^2+z}\:d\xi,\quad z\in \mathbb{C},\; w\in \mathbb{R}^n,\; |z|\geq \delta>0,\; \mathrm{Im}(z)\not=0.
\end{align*}
It is known that $F_0$ is a fundamental solution of $-\Delta+z$, i.e.,
\begin{align}\label{F0 fundamental solution}
    (-\Delta+z) F_0=\delta_0
\end{align}
where $\delta_0$ is the delta distribution. We recall some known results of $F_0$. For details, we refer to \cite[Lemma 4.3]{Sogge1988concerning}, \cite[Lemma 3.1]{DSFKenigSalo2014Forum}, \cite{KenigRuizSogge1987UniformSobolev}, and the references therein. 

\begin{proposition}\label{Prop summary of F0}
    There is a constant $C>0$ such that
    \begin{align*}
        |F_0 (r, z)|\leq Cr^{-n+2},\quad \text{when } r\leq |z|^{-\frac{1}{2}}.
    \end{align*}
    Moreover, we have
    \begin{align*}
        F_0 (r, z)=|z|^{\frac{n-1}{4}-\frac{1}{2}} e^{-\sqrt{z}r} r^{-\frac{n-1}{2}} a(r, z),\quad \text{when } r\geq |z|^{-\frac{1}{2}},
    \end{align*}
    where the amplitude $a$ has the following size estimates
    \begin{align*}
        \left|\frac{\partial^\alpha}{\partial r^\alpha} a(r, z) \right|\leq C_\alpha r^{-|\alpha|},\quad \text{for } \alpha\in \mathbb{N} \text{ and } r\geq |z|^{-\frac{1}{2}}. 
    \end{align*}
    The constants are all uniform with respect to $z\in \mathbb{C}$, where $|z|\geq \delta>0$.
\end{proposition}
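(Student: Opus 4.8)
The plan is to reduce the whole statement to the classical asymptotics of the modified Bessel function $K_\nu$ with $\nu=\tfrac{n-2}{2}$, which are valid uniformly on sectors. First I would record the subordination identity: for $\mathrm{Re}\,z>0$, writing $(|\xi|^2+z)^{-1}=\int_0^\infty e^{-t(|\xi|^2+z)}\,dt$ and evaluating the resulting Gaussian integral in $\xi$ gives
\begin{align*}
 F_0(r,z)=(4\pi)^{-\frac n2}\int_0^\infty t^{-\frac n2}e^{-\frac{r^2}{4t}-zt}\,dt=c_n\,z^{\frac{n-2}{4}}\,r^{-\frac{n-2}{2}}\,K_{\frac{n-2}{2}}\!\left(\sqrt z\,r\right),
\end{align*}
where the second equality is the standard integral representation $\int_0^\infty t^{\nu-1}e^{-a/t-bt}\,dt=2(a/b)^{\nu/2}K_\nu(2\sqrt{ab})$ together with $K_{-\nu}=K_\nu$. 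Both sides are holomorphic in $z\in\mathbb{C}\setminus\mathbb{R}_-$ (the left-hand side via the distributional reading of the oscillatory integral, the right-hand side by analyticity of $K_\nu$ and of the principal branch of $\sqrt z$), so by analytic continuation the identity persists for every $z$ with $|z|\geq\delta$ and $\mathrm{Im}\,z\neq 0$, which by the limiting argument recalled above is the only range we need.

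For $r\leq |z|^{-1/2}$, i.e.\ $|\sqrt z\,r|\leq 1$, I would invoke the small-argument bound $|K_\nu(s)|\lesssim |s|^{-\nu}$ on $\{\mathrm{Re}\,s\geq 0,\ 0<|s|\leq 1\}$ for $\nu=\tfrac{n-2}{2}\geq\tfrac12$; when $n$ is even and $\nu$ is a positive integer the logarithmic terms in the expansion of $K_\nu$ are of lower order and stay $O(1)$ on $|s|\leq 1$, so the bound survives. Substituting $s=\sqrt z\,r$ yields $|F_0(r,z)|\lesssim |z|^{\frac{n-2}{4}}r^{-\frac{n-2}{2}}(\sqrt{|z|}\,r)^{-\frac{n-2}{2}}=r^{-(n-2)}$, the first estimate. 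For $r\geq |z|^{-1/2}$, i.e.\ $|\sqrt z\,r|\geq 1$, I would use the large-argument expansion
\begin{align*}
 K_\nu(s)=\sqrt{\tfrac{\pi}{2s}}\,e^{-s}\,\widetilde a_\nu(s),\qquad \bigl|\partial_s^k\widetilde a_\nu(s)\bigr|\leq C_k\,|s|^{-k}\ \ (|s|\geq 1,\ \mathrm{Re}\,s\geq 0),
\end{align*}
which applies to $s=\sqrt z\,r$ since the principal branch places $\sqrt z$ in the closed right half-plane. Plugging in gives $F_0(r,z)=c_n'\,z^{\frac{n-3}{4}}r^{-\frac{n-1}{2}}e^{-\sqrt z\,r}\widetilde a_\nu(\sqrt z\,r)$; since $\tfrac{n-1}{4}-\tfrac12=\tfrac{n-3}{4}$, writing $z^{\frac{n-3}{4}}=|z|^{\frac{n-3}{4}}e^{i\frac{n-3}{4}\arg z}$ and absorbing the unimodular factor, one sets $a(r,z)=c_n'\,e^{i\frac{n-3}{4}\arg z}\,\widetilde a_\nu(\sqrt z\,r)$, which has exactly the claimed shape. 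The size estimates follow from the chain rule: $\partial_r^\alpha a(r,z)=c_n'\,e^{i\frac{n-3}{4}\arg z}(\sqrt z)^\alpha\,\widetilde a_\nu^{(\alpha)}(\sqrt z\,r)$, hence $|\partial_r^\alpha a(r,z)|\lesssim |z|^{\alpha/2}(\sqrt{|z|}\,r)^{-\alpha}=r^{-\alpha}$, the powers of $z$ cancelling.

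The only point that really demands care is the \emph{uniformity} in $z$: as $z\to\mathbb{R}_-$ the quantity $\mathrm{Re}\sqrt z$ degenerates, yet the expansion of $K_\nu$ and the bounds on $\widetilde a_\nu$ hold uniformly on the whole closed sector $\{\mathrm{Re}\,s\geq 0\}$. This is where one uses a Schläfli-type integral representation, e.g.\ $\widetilde a_\nu(s)=\Gamma(\nu+\tfrac12)^{-1}\int_0^\infty e^{-t}t^{\nu-\frac12}(1+\tfrac{t}{2s})^{\nu-\frac12}\,dt$, whose $s$-derivatives are manifestly $O(|s|^{-k})$ there; the matching at $|\sqrt z\,r|\approx 1$ is harmless because the two regimes overlap. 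I expect this bookkeeping (and the even-$n$ logarithm in the small-argument regime) to be the only mild nuisance; otherwise the argument is the classical resolvent-kernel computation, carried out in detail in \cite[Lemma 4.3]{Sogge1988concerning} and \cite[Lemma 3.1]{DSFKenigSalo2014Forum}, which one may also simply cite.
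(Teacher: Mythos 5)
Your argument is correct: the subordination identity reduces $F_0$ to $c_n z^{\frac{n-2}{4}}r^{-\frac{n-2}{2}}K_{\frac{n-2}{2}}(\sqrt z\,r)$, and the small- and large-argument asymptotics of $K_\nu$, uniform on the closed right half-plane via the integral representation you quote, give exactly the stated bounds and symbol estimates for $a$. The paper offers no proof of its own but simply cites \cite[Lemma 4.3]{Sogge1988concerning} and \cite[Lemma 3.1]{DSFKenigSalo2014Forum}, whose arguments are precisely this classical Bessel-function computation, so your route coincides with the intended one.
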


Now, without loss of generality, we can parametrize a smooth hypersurface $\Sigma$ in $\mathbb{R}^n$ as
\begin{align}\label{Hypsurf param in Rn}
    x_n=h(x'),\quad x=(x', x_n)\in \mathbb{R}^{n-1}\times \mathbb{R},\quad |x_n|\ll 1,\quad \text{where } h:\mathbb{R}^{n-1}\to \mathbb{R} \text{ is smooth.}
\end{align}
Indeed, for elements in a sufficiently fine partition of unity, this can be arranged by applying Euclidean isometries which preserve $L^p$ norms. We define the operator $T(z)$ by
\begin{align*}
    T(z)u(x')=\int_{\mathbb{R}^n} \tilde{F_0} (x', y, z) u(y)\:dy,
\end{align*}
where
\begin{align*}
    \tilde{F_0} (x', y, z)=F_0 (|(x', h(x'))-y|, z)\tilde{a}(x')=F_0 (|x-y|, z)\tilde{a}(x'),
\end{align*}
and $\tilde{a}(x')$ is a smooth function depending on $q$, the volume element, and $\tilde{\beta}\in C_0^\infty$ in \eqref{Restriction Lq u computation} when we consider the $L^q$ norms of $T(z)u(x')$ later, compared to the $L^q$ norms of $T(z)u(x)$ defined analogously. In this section, when we write $|x-y|$, we think of it as
\begin{align*}
    |x-y|=|(x', h(x'))-y|,
\end{align*}
unless otherwise specified.

We now consider a partition of unity (cf. \cite[p.828]{DSFKenigSalo2014Forum}). Let $\beta_0\in C_0^\infty (\mathbb{R})$ be such that
\begin{align*}
    \mathrm{supp}(\beta_0) \subset [-1, 1],\quad \beta_0 \equiv 1 \text{ on } [-1/2, 1/2],
\end{align*}
and
\begin{align*}
    \beta=\beta_0 (\cdot/2)-\beta_0\in C_0^\infty (\mathbb{R}),\quad \mathrm{supp}(\beta)\subset [-2, -1/2]\cup [1/2, 2],\quad \beta_j=\beta(2^{-j}\cdot) \text{ for } j\geq 1
\end{align*}
so that we have a partition of unity
\begin{align*}
    \beta_0 (t)+\sum_{j=1}^\infty \beta(2^{-j}t)=\sum_{j=0}^\infty \beta_j (t)=1,\quad t\in \mathbb{R},
\end{align*}
Using this partition of unity, we decompose $T(z)$ dyadically
\begin{align*}
    T(z)=T_0 (z)+\sum_{j=1}^\infty T_j (z),
\end{align*}
where
\begin{align}\label{Tj construction}
    T_j (z) u(x')=\int_{\mathbb{R}^n} \beta_j (|z|^{\frac{1}{2}} |x-y|) \tilde{F_0} (x', y, z)u(y)\:dy,\quad j\geq 0.
\end{align}

We first consider $T_0 (z)$ defined in \eqref{Tj construction}. We first have the following version of the Sobolev trace formula.

\begin{lemma}\label{Lemma Sobolev trace for hypsurf}
    Suppose $\frac{n}{p}-\frac{n-1}{q}=2$, $1<p, q<\infty$, and $n\geq 3$. Then we have
    \begin{align}\label{T0 est for hypsurf in Euclidean}
        \|T_0 (z) f\|_{L^q (\Sigma)}\lesssim \|f\|_{L^p(\mathbb{R}^n)}.
    \end{align}
\end{lemma}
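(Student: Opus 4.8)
The plan is to dispose of the near‑diagonal piece $T_0(z)$ by bounding its kernel, uniformly in $z$, by that of the Riesz potential of order $2$ on $\mathbb R^n$, and then appealing to a trace (Sobolev‑type) inequality for Riesz potentials onto the $(n-1)$‑dimensional surface $\Sigma$.

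First I would record a pointwise kernel bound. On $\mathrm{supp}\,\beta_0(|z|^{\frac12}|x-y|)$ one has $|x-y|\le |z|^{-\frac12}$, so Proposition \ref{Prop summary of F0} gives $|\tilde F_0(x',y,z)|=|F_0(|x-y|,z)|\,|\tilde a(x')|\lesssim |x-y|^{-(n-2)}|\tilde a(x')|$. Since $|z|\ge\delta$, the cutoff confines $|x-y|$ to the fixed ball $\{|x-y|\le\delta^{-1/2}\}$, and $\tilde a$ is bounded with compact support; hence, writing $I_2$ for the Riesz potential of order $2$ and $K$ for a fixed compact set,
\begin{align*}
|T_0(z)f(x')|\ \le\ C\,\mathbf{1}_K(x')\,(I_2|f|)(x',h(x')),\qquad C=C(\delta,n,p,q,\tilde\beta,\Sigma),
\end{align*}
uniformly in $z$. (The relation $\tfrac np-\tfrac{n-1}q=2$ with $q>1$ forces $\tfrac np>2$, hence $1<p<n/2$, so $I_2|f|\in L^{p^*}(\mathbb R^n)$ with $\tfrac1{p^*}=\tfrac1p-\tfrac2n$ and the right side is finite a.e.) After changing variables on $\Sigma$ via the graph parametrization \eqref{Hypsurf param in Rn}, the $L^q(\Sigma)$ norm of $T_0(z)f$ is therefore controlled by $\big\|\mathbf{1}_K\,(I_2|f|)(\cdot,h(\cdot))\big\|_{L^q(\mathbb R^{n-1})}$, and it remains to prove the $z$‑free bound
\begin{align*}
\big\|(I_2|f|)(\cdot,h(\cdot))\big\|_{L^q(K)}\ \lesssim\ \|f\|_{L^p(\mathbb R^n)}.
\end{align*}

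For this I would invoke the trace inequality for Riesz potentials (Adams' trace theorem, which is also the form of the uniform Sobolev trace estimates used in \cite{BlairPark2024RestrictionOfSchrodingerEigenfunctions}): for $1<p\le q<\infty$ and $0<2p<n$, the bound $\|I_2 g\|_{L^q(d\mu)}\lesssim\|g\|_{L^p(\mathbb R^n)}$ holds whenever the positive measure $\mu$ satisfies $\mu(B(x,r))\lesssim r^{(n-2p)q/p}$ for all balls $B(x,r)$. Here $\mu$ is surface measure restricted to the compact set $K\subset\Sigma$; since $\Sigma$ is a smooth hypersurface, $\mu(B(x,r))\lesssim r^{\,n-1}$ for all $r>0$, so the hypothesis becomes $n-1\le (n-2p)q/p$, equivalently $\tfrac np-\tfrac{n-1}q\le 2$ — which holds with equality by assumption. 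The side conditions $1<p<q$ and $0<2p<n$ are automatic from $\tfrac np-\tfrac{n-1}q=2$ together with $1<p,q<\infty$ (indeed $q=p$ would force $p=\tfrac12$). This yields the desired estimate. Alternatively one can argue by Sobolev embeddings: $I_2$ maps $L^p(\mathbb R^n)$ into $W^{2,p}_{\mathrm{loc}}(\mathbb R^n)$, the boundary trace theorem (after localizing and flattening the graph by a smooth diffeomorphism, which preserves $L^p$‑Sobolev spaces) maps $W^{2,p}$ to $W^{2-1/p,p}$ on $\Sigma$, and $W^{2-1/p,p}(\mathbb R^{n-1})\hookrightarrow L^q(\mathbb R^{n-1})$ precisely when $(2-\tfrac1p)-\tfrac{n-1}p=-\tfrac{n-1}q$, i.e. when $\tfrac np-\tfrac{n-1}q=2$.

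There is no genuine obstacle in this lemma — it handles only the ``elliptic'' near‑diagonal part of $T(z)$, and the oscillatory pieces $T_j(z)$, $j\ge1$, are where the real restrictions on $(p,q)$ in Theorem \ref{Thm Euclidean resolvent restriction estimates for hypersurfaces} will arise. The only two points requiring attention are that the exponent identity $\tfrac np-\tfrac{n-1}q=2$ is exactly the borderline (scale‑invariant) case of the trace inequality, which is still admissible for an $(n-1)$‑dimensional smooth surface, and that the constant is uniform in $z$ — immediate since $|z|\ge\delta$ confines the $\beta_0$‑cutoff to a fixed ball and makes the dominating kernel $|x-y|^{-(n-2)}\mathbf{1}_{|x-y|\le\delta^{-1/2}}$ independent of $z$.
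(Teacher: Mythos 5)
Your argument is correct, and it has the same overall structure as the paper's: bound the kernel of $T_0(z)$, uniformly in $z$ with $|z|\ge\delta$, by the order-$2$ elliptic kernel $|x-y|^{2-n}$ on the region $|x-y|\le|z|^{-1/2}$ (Proposition \ref{Prop summary of F0}), and then invoke a Sobolev-trace-type estimate for that fixed kernel against the compactly supported surface measure. The only real difference is the black box used in the second step: the paper passes to the kernel $(|x'-y'|+|h(x')-y_n|)^{2-n}$ in the graph coordinates \eqref{Hypsurf param in Rn} and quotes (the proof of) \cite[Proposition 5.2]{BlairPark2024RestrictionOfSchrodingerEigenfunctions}, whereas you dominate by the Riesz potential $I_2|f|$ evaluated on the graph and apply Adams' trace inequality for an $(n-1)$-dimensional Frostman measure (or, alternatively, the classical $W^{2,p}\to W^{2-1/p,p}\hookrightarrow L^q$ trace route). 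Your version is self-contained relative to classical literature and makes transparent that the scaling relation $\frac np-\frac{n-1}q=2$ is exactly the borderline admissible case for an $(n-1)$-dimensional measure, and you correctly verify the side conditions $1<p<q$ and $2p<n$; the paper's version instead reuses the precise trace lemma already established in the companion paper, which is what the later $T_j(z)$, $j\ge1$, analysis also leans on. One small slip: the Frostman condition required by Adams is $\mu(B(x,r))\lesssim r^{(n-2p)q/p}$, which for a measure with $\mu(B(x,r))\lesssim r^{n-1}$ amounts to $(n-2p)q/p\le n-1$, i.e.\ $\frac np-\frac{n-1}q\le 2$; you stated the inequality in the reversed direction ($n-1\le (n-2p)q/p$), but since the scaling hypothesis gives equality this does not affect the conclusion.
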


We note that the kernel $T_0 (z) (x', y)$ of $T_0 (z)$ is bounded by a constant times
\begin{align*}
    |x-y|^{-n+2} \mathds{1}_{|x-y|\leq |z|^{-\frac{1}{2}}} (x, y),\quad x, y\in \mathbb{R}^n,\quad \text{where } |x-y|=|(x', h(x'))-y|.
\end{align*}
We write $y\in \mathbb{R}^n$ as
\begin{align*}
    y=(y', y_n)\in \mathbb{R}^{n-1}\times \mathbb{R}.
\end{align*}
By this and the parametrization in \eqref{Hypsurf param in Rn}, we have
\begin{align*}
    |T_0 (z) (x', y)|\lesssim k(x', y', y_n):=(|x'-y'|+|h(x')-y_n|)^{2-n}.
\end{align*}
With this in mind, we recall \cite[Proposition 5.2]{BlairPark2024RestrictionOfSchrodingerEigenfunctions}.
\begin{proposition}[\cite{BlairPark2024RestrictionOfSchrodingerEigenfunctions}]\label{BP Prop 5.2}
    Suppose $\frac{n}{p}-\frac{n-1}{q}=2$, $1<p, q<\infty$, and $n\geq 3$. We write coordinates in $\mathbb{R}^n$ as $(y', s)\in \mathbb{R}^{n-1} \times \mathbb{R}$. Define
    \begin{align*}
        k(x', y', s)=(|x'-y'|+|s|)^{2-n},\quad \text{where } x'\in \mathbb{R}^{n-1},
    \end{align*}
    Then the operator
    \begin{align*}
        Tf(x'):=\int k(x', y', s) f(y', s)\:ds\:dy'
    \end{align*}
    defines a bounded linear map $T:L^p (\mathbb{R}^n) \to L^q (\mathbb{R}^{n-1})$.
\end{proposition}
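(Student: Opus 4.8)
The plan is to \emph{integrate out the transversal variable $s$ first} by H\"older's inequality, which converts $T$ into an honest fractional integration operator on $\mathbb{R}^{n-1}$, and then to invoke the Hardy--Littlewood--Sobolev inequality in dimension $n-1$. Concretely, fix $x',y'\in\mathbb{R}^{n-1}$, write $\rho=|x'-y'|$, and apply H\"older's inequality in $s$ to the inner integral $\int_{\mathbb{R}}(\rho+|s|)^{2-n}|f(y',s)|\,ds$. Since $n\geq 3$ and $p>1$ we have $(n-2)p'>1$, so the weight $(\rho+|s|)^{(2-n)p'}$ is integrable on $\mathbb{R}$ and $\big\|(\rho+|\cdot|)^{2-n}\big\|_{L^{p'}(\mathbb{R})}\approx \rho^{(2-n)+1/p'}$. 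Writing $g(y'):=\|f(y',\cdot)\|_{L^p(\mathbb{R})}$, which by Fubini lies in $L^p(\mathbb{R}^{n-1})$ with $\|g\|_{L^p(\mathbb{R}^{n-1})}=\|f\|_{L^p(\mathbb{R}^n)}$, this gives the pointwise bound
\begin{align*}
    |Tf(x')|\;\lesssim\;\int_{\mathbb{R}^{n-1}}|x'-y'|^{(2-n)+1/p'}\,g(y')\,dy'.
\end{align*}

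Now $(2-n)+1/p'=-(n-1)+\alpha$ with $\alpha:=2-\tfrac1p$, so the right-hand side is a constant multiple of the Riesz potential $I_\alpha g$ of order $\alpha$ on $\mathbb{R}^{n-1}$. Since $1<p<\infty$ we have $\alpha\in(1,2)$, and as $n\geq 3$ this gives $0<\alpha<n-1$, so $I_\alpha$ is a legitimate fractional integral on $\mathbb{R}^{n-1}$. The Hardy--Littlewood--Sobolev inequality then yields $\|I_\alpha g\|_{L^q(\mathbb{R}^{n-1})}\lesssim\|g\|_{L^p(\mathbb{R}^{n-1})}$ provided $1<p<q<\infty$ and $\tfrac1p-\tfrac1q=\tfrac{\alpha}{n-1}$. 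A direct computation shows that $\tfrac1p-\tfrac1q=\tfrac{2-1/p}{n-1}$ is equivalent to the hypothesis $\tfrac np-\tfrac{n-1}q=2$; moreover that same hypothesis forces $\tfrac1p-\tfrac1q=\tfrac{\alpha}{n-1}>0$, hence $p<q$, so we are in the strong-type range of the inequality. Combining,
\begin{align*}
    \|Tf\|_{L^q(\mathbb{R}^{n-1})}\;\lesssim\;\|I_\alpha g\|_{L^q(\mathbb{R}^{n-1})}\;\lesssim\;\|g\|_{L^p(\mathbb{R}^{n-1})}\;=\;\|f\|_{L^p(\mathbb{R}^n)},
\end{align*}
which is the claim.

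I do not expect a genuine obstacle here: the proof is a one-line reduction plus a classical estimate, and the only point requiring care is the exponent bookkeeping---checking that the Riesz order $\alpha=2-\tfrac1p$ stays strictly inside $(0,n-1)$ and that $p<q$ strictly, so that one avoids the weak-type endpoints of Hardy--Littlewood--Sobolev; both facts follow automatically from $1<p,q<\infty$ together with $\tfrac np-\tfrac{n-1}q=2$. As an alternative, one can note the pointwise comparison $k(x',y',s)\approx |(x',0)-(y',s)|^{2-n}$, so that $Tf$ is (a constant times) the trace on $\{x_n=0\}$ of $(-\Delta)^{-1}f$ on $\mathbb{R}^n$, and then chain the Calder\'on--Zygmund bound $f\mapsto D^2(-\Delta)^{-1}f$ on $L^p(\mathbb{R}^n)$ with the Sobolev trace theorem $\dot W^{2,p}(\mathbb{R}^n)\to \dot B^{\,2-1/p}_{p,p}(\mathbb{R}^{n-1})$ and the Besov embedding $\dot B^{\,2-1/p}_{p,p}(\mathbb{R}^{n-1})\hookrightarrow L^q(\mathbb{R}^{n-1})$; but the H\"older-plus-Hardy--Littlewood--Sobolev argument above is shorter and self-contained.
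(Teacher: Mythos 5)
Your argument is correct: H\"older in the transversal variable $s$ (using $(n-2)p'>1$, valid since $n\geq 3$, $p>1$) reduces $T$ to the Riesz potential $I_\alpha$ of order $\alpha=2-\tfrac1p\in(1,2)\subset(0,n-1)$ acting on $g(y')=\|f(y',\cdot)\|_{L^p(\mathbb{R})}$, and the scaling condition $\tfrac1p-\tfrac1q=\tfrac{\alpha}{n-1}$ for Hardy--Littlewood--Sobolev is indeed equivalent to $\tfrac np-\tfrac{n-1}q=2$ and forces $p<q$, so all hypotheses of the strong-type HLS inequality are met; the exponent bookkeeping checks out. Note that the paper does not reproduce a proof of this proposition at all --- it is imported verbatim from Blair--Park \cite{BlairPark2024RestrictionOfSchrodingerEigenfunctions}, and the surrounding text only uses it as a black box to deduce Lemma \ref{Lemma Sobolev trace for hypsurf} --- so your slicing-plus-fractional-integration argument serves as a short, self-contained verification in the same elementary spirit as the cited source, and is a perfectly adequate substitute for the citation; the alternative route you sketch through the trace and Besov embeddings is heavier machinery than the statement requires.
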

Then the proof of Proposition \ref{BP Prop 5.2} gives us Lemma \ref{Lemma Sobolev trace for hypsurf}. For details, see \cite[Proposition 5.2]{BlairPark2024RestrictionOfSchrodingerEigenfunctions}. We now consider the other $T_j (z)$ with $j\geq 1$. We want to show that, if $j\geq 1$, then for some $\alpha_n (p, q)>0$,
\begin{align}\label{WTS in S2}
    \|T_j (z)u\|_{L^q (\Sigma)}\lesssim (2^j)^{-\alpha_n (p, q)} \|u\|_{L^p (\mathbb{R}^n)},\quad \text{when } n\geq 3 \text{ and \eqref{Exponent condition for hypersurfaces in Rn} holds}
\end{align}
so that summing up these estimates for all $j\geq 1$ yields \eqref{Unif Sobolev est for hypsurf}. The kernel of $T_j (z)$ is supported where
\begin{align}\label{x-y range}
    2^{j-1} |z|^{-\frac{1}{2}}\leq |x-y|\leq 2^{j+1} |z|^{-\frac{1}{2}},\quad x=(x', h(x'))\in \Sigma,\; y\in \mathbb{R}^n,
\end{align}
by the support properties of $\beta_j (|z|^{\frac{1}{2}} |x-y|)$ in \eqref{Tj construction}. By the construction \eqref{Tj construction} and Proposition \ref{Prop summary of F0}, we write
\begin{align*}
    T_j (z) u(x')=|z|^{\frac{n-1}{4}-\frac{1}{2}}\int_{\mathbb{R}^n} e^{-\sqrt{z}|x-y|} \frac{a(x, y, z)}{|x-y|^{\frac{n-1}{2}}} \beta(2^{-j}|z|^{\frac{1}{2}}|x-y|)u(y)\:dy,\quad j\geq 1,
\end{align*}
where, abusing notations, $a(x, y, z)$ is $a(|x-y|, z)$ in Proposition \ref{Prop summary of F0}. We consider two cases separately:
\begin{align*}
    \arg z\in \left[-\frac{\pi}{2}, \frac{\pi}{2}\right] \quad \text{and} \quad \arg z\not\in\left[-\frac{\pi}{2}, \frac{\pi}{2} \right],
\end{align*}
where we use the principal branch for $z\in \mathbb{C}$. We first suppose $\arg z\in \left[-\frac{\pi}{2}, \frac{\pi}{2}\right]$. We note that
\begin{align*}
    \mathrm{Re}\sqrt{z}=|z|^{\frac{1}{2}}\cos\left(\frac{\arg z}{2} \right),\quad \mathrm{Im}\sqrt{z}=|z|^{\frac{1}{2}} \sin\left(\frac{\arg z}{2} \right).
\end{align*}
By this and $\arg z\in \left[-\frac{\pi}{2}, \frac{\pi}{2} \right]$, we have
\begin{align}\label{e to the sqrt of z times |x-y|}
    \begin{split}
        |e^{-\sqrt{z}|x-y|}|&=|e^{-(\mathrm{Re}\sqrt{z}+i \mathrm{Im}\sqrt{z})|x-y|}| \\
        &=e^{-\mathrm{Re}\sqrt{z}|x-y|}=e^{-|z|^{\frac{1}{2}}\cos \left(\frac{\arg z}{2} \right)|x-y|} \leq e^{-|z|^{\frac{1}{2}}\cos \frac{\pi}{4}|x-y|}=e^{-\frac{1}{\sqrt{2}}|z|^{\frac{1}{2}}|x-y|},
    \end{split}
\end{align}
and thus, the kernel of $T_j (z)$ is bounded by a constant times
\begin{align*}
    |z|^{\frac{n-3}{4}} |x-y|^{-\frac{n-1}{2}} e^{-\frac{1}{\sqrt{2}}|z|^{\frac{1}{2}}|x-y|}.
\end{align*}
By \eqref{x-y range}, we know $|z|^{\frac{1}{2}}|x-y|\geq 2^{j-1}\geq 1$ for $j\geq 1$, and so,
\begin{align*}
    e^{-\frac{1}{\sqrt{2}}|z|^{\frac{1}{2}}|x-y|}\leq C_N (|z|^{\frac{1}{2}}|x-y|)^{-N},\quad \text{for any } N\geq 0.
\end{align*}
This implies that the kernel of $T_j (z)$ is majorized by a constant times
\begin{align*}
    |z|^{\frac{n-3}{4}} |x-y|^{-\frac{n-1}{2}} (|z|^{\frac{1}{2}}|x-y|)^{-N}=|z|^{\frac{n-3}{4}-\frac{N}{2}}|x-y|^{-\frac{n-1}{2}-N},\quad \text{for any } N\geq 0.
\end{align*}
By \eqref{x-y range}, we know $|z|\approx 2^{2j} |x-y|^{-2}$, and so,
\begin{align*}
    |z|^{\frac{n-3}{4}-\frac{N}{2}}|x-y|^{-\frac{n-1}{2}-N}&\approx (2^j)^{\frac{n-3}{2}-N} |x-y|^{2-n}.
\end{align*}
Choosing $N$ sufficiently large, this is bounded by a constant times $(2^j)^{-\alpha_n'}|x-y|^{2-n}$ for some $\alpha_n'>0$, and so, by the proof of Lemma \ref{Lemma Sobolev trace for hypsurf}, the bound for $T_j (z)$ with $\arg z\in \left[-\frac{\pi}{2}, \frac{\pi}{2} \right]$ satisfies a desired bound, that is,
\begin{align}\label{Tj est for small arg z}
    \sum_{j=1}^\infty\|T_j (z)\|_{L^p (\mathbb{R}^n)\to L^q (\Sigma)}\lesssim 1,\quad \text{when } \arg z\in \left[-\frac{\pi}{2}, \frac{\pi}{2}\right] \text{ and } \eqref{Exponent condition for hypersurfaces in Rn} \text{ holds.}
\end{align}

We are left to consider the case where $\arg z\not\in \left[-\frac{\pi}{2}, \frac{\pi}{2}\right]$ for $T_j (z)$. In this case, we need an argument different from the above one, since the computation in \eqref{e to the sqrt of z times |x-y|} does not hold for $\arg z\not\in \left[-\frac{\pi}{2}, \frac{\pi}{2}\right]$. Indeed, if we apply the same argument in \eqref{e to the sqrt of z times |x-y|}, then
\begin{align*}
    |e^{-\sqrt{z}|x-y|}|=e^{-|z|^{\frac{1}{2}}\cos \left(\frac{\arg z}{2} \right)|x-y|}\leq e^{-|z|^{\frac{1}{2}}\left(\cos \frac{\pi}{2}\right)|x-y|}= e^0=1,
\end{align*}
and this is not enough for our computation. In short, if $\arg z\not\in \left[-\frac{\pi}{2}, \frac{\pi}{2}\right]$, then we shall consider the oscillation of $e^{-\sqrt{z}|x-y|}$. Recall that, by construction, for $|x-y|=|(x', h(x'))-y|$ as above,
\begin{align*}
    T_j (z) u(x')&=|z|^{\frac{n-1}{4}-\frac{1}{2}}\int_{\mathbb{R}^n} e^{-\sqrt{z}|x-y|} \frac{a(x, y, z)}{|x-y|^{\frac{n-1}{2}}} \beta(2^{-j}|z|^{\frac{1}{2}}|x-y|) u(y)\:dy \\
    &=|z|^{\frac{n-1}{4}-\frac{1}{2}}\int_{\mathbb{R}^n} e^{-i\mathrm{Im}\sqrt{z}|x-y|} e^{-\mathrm{Re}\sqrt{z}|x-y|} \frac{a(x, y, z)}{|x-y|^{\frac{n-1}{2}}} \beta(2^{-j}|z|^{\frac{1}{2}}|x-y|) u(y)\:dy.
\end{align*}
Since the kernel of $T_j (z)$ is supported where $2^{j-1}|z|^{-\frac{1}{2}}\leq |x-y|\leq 2^j |z|^{-\frac{1}{2}}$ by \eqref{x-y range}, we may assume that $u$ is supported in a ball of radius $2^j |z|^{-\frac{1}{2}}$. After a possible translation, without loss of generality, for the purpose of proving \eqref{WTS in S2}, we may assume that $u$ is supported in a ball of radius $2^j |z|^{-\frac{1}{2}}$ centered at $0$, i.e.,
\begin{align}\label{support of u}
    \mathrm{supp}(u)\subset \{y\in \mathbb{R}^n: |y|\leq 2^j |z|^{-\frac{1}{2}}\}.
\end{align}
If we use the change of variables
\begin{align*}
    x=2^j |z|^{-\frac{1}{2}} X,\quad y=2^j |z|^{-\frac{1}{2}} Y,
\end{align*}
then we can write
\begin{align}\label{Tj Tj tilde relation}
    T_j (z) u(x)=|z|^{-1}(2^j)^{\frac{n+1}{2}} \Tilde{T}_j (z) u_j (X),
\end{align}
where
\begin{align}\label{Tj tilde setup}
    \begin{split}
        & u_j (Y)=u(2^j |z|^{-\frac{1}{2}} Y), \\
        & a_j (X, Y, z)=e^{-2^j \cos\left(\frac{\arg z}{2}\right)|X-Y|}\frac{a(2^j|z|^{-\frac{1}{2}}X, 2^j|z|^{-\frac{1}{2}}Y, z)}{|X-Y|^{\frac{n-1}{2}}}\beta(|X-Y|), \\
        & \Tilde{T}_j (z) u_j (X')=\int_{\mathbb{R}^n} e^{-i 2^j \sin\left(\frac{\arg z}{2} \right)|X-Y|} a_j (X, Y, z) u_j (Y)\:dY, \\
        & X=(X', h(X'))\in \mathbb{R}^{n-1}\times \mathbb{R},\quad |X-Y|=|(X', h(X'))-Y|.
    \end{split}
\end{align}
We note that we can focus on $|Y|\lesssim 1$ by \eqref{support of u}, and $|X-Y|\approx 1$ by the support property of the term $\beta(|X-Y|)$. Since $\sin\left(\frac{\arg z}{2}\right)\approx 1$ for $\arg z\not\in\left[-\frac{\pi}{2}, \frac{\pi}{2}\right]$, abusing notations, $\Tilde{T}_j (z) u_j (X')$ can be thought of as
\begin{align}\label{Abusing notation for Tj tilde opr setup}
    \int_{\mathbb{R}^n} e^{-2^j |X-Y|} a_j (X, Y, z) u_j (Y)\:dY,\quad X=(X', h(X'))\in \mathbb{R}^{n-1}\times \mathbb{R}.
\end{align}

Since we are computing estimates for $T_j (z)$ and $\Tilde{T}_j (z)$ locally, abusing notations, we can write
\begin{align}\label{Tj Tj tilde simplify notations}
    \begin{split}
        & \|T_j (z)\|_{L^p (M)}, \|T_j (z)\|_{L^q (\Sigma)}, \|\Tilde{T}_j (z)\|_{L^p (M)}, \text{ and } \|\Tilde{T}_j (z)\|_{L^q (\Sigma)}, \quad\text{as} \\
        & \|T_j (z)\|_{L^p (\mathbb{R}^n)}, \|T_j (z) \|_{L^q (\mathbb{R}^k)}, \|\Tilde{T}_j (z)\|_{L^p (\mathbb{R}^n)}, \text{ and } \|\Tilde{T}_j (z)\|_{L^q (\mathbb{R}^k)}, \text{ respectively,} \\
        & \text{where } n=\dim M \text{ and } k=\dim \Sigma=n-1.
    \end{split}
\end{align}
One reason why we are abusing notations here is that we are in local coordinates as in \eqref{Hypsurf param in Rn}. We are also making use of the change of variables for $T_j (z)$ and $\Tilde{T}_j (z)$. Since $T_j (z)$ is defined for the variable $x'$ and $\Tilde{T}_j (z)$ is defined for the variable $X'$, to distinguish this difference, we write, for $n=\dim M$,
\begin{align}\label{Tj Tj tilde simplified notations}
    \begin{split}
        & \|T_j (z) u\|_{L_y^p (\mathbb{R}^n)}=\left(\int_{\mathbb{R}^n} |T_j (z) u(y)|^p \:dy \right)^{\frac{1}{p}},\quad \|\Tilde{T}_j (z) u_j\|_{L_Y^p (\mathbb{R}^n)}=\left(\int_{\mathbb{R}^n} |\Tilde{T}_j (z) u_j(Y)|^p\:dY \right)^{\frac{1}{p}}, \\
        & \|T_j (z) u\|_{L_{x'}^p (\mathbb{R}^{n-1})}=\left(\int_{\mathbb{R}^{n-1}} |T_j (z) u(x')|^p \:dx' \right)^{\frac{1}{p}},\quad \|\Tilde{T}_j (z) u_j\|_{L_{X'}^p (\mathbb{R}^{n-1})}=\left(\int_{\mathbb{R}^{n-1}} |\Tilde{T}_j (z) u_j(X')|^p\:dX' \right)^{\frac{1}{p}}.
    \end{split}
\end{align}
Here, $\|\Tilde{T}_j (z) u_j\|_{L_{X'}^p (\mathbb{R}^{n-1})}$ (similarly $\|u_j\|_{L_X^p (\mathbb{R}^{n-1})}$ and $\|T_j (z) u\|_{L_x^p (\mathbb{R}^{n-1})}$ as well) can be interpreted as
\begin{align*}
    \left(\int_{\mathbb{R}^{n-1}} |\Tilde{T}_j (z) u_j(X', h(X'))|^p \:dX' \right)^{\frac{1}{p}},\quad \text{for } X=(X', h(X'))\in \mathbb{R}^{n-1}\times \mathbb{R}.
\end{align*}

With this in mind, by a straightforward computation, one can see that
\begin{align}\label{Operator norm comparisons for Euclidean hypersurfaces}
    \begin{split}
        & \|T_j (z)\|_{L_y^p (\mathbb{R}^n)\to L_{x'}^q (\mathbb{R}^{n-1})}=|z|^{\frac{1}{2}\left(\frac{n}{p}-\frac{n-1}{q}-2 \right)} (2^j)^{\frac{n+1}{2}-\left(\frac{n}{p}-\frac{n-1}{q} \right)} \|\Tilde{T}_j (z)\|_{L_Y^p (\mathbb{R}^n)\to L_{X'}^q (\mathbb{R}^{n-1})},\quad \text{and} \\
        & \|T_j (z)\|_{L_y^p (\mathbb{R}^n)\to L_{x'}^q (\mathbb{R}^{n-1})}=(2^j)^{\frac{n-3}{2}} \|\Tilde{T}_j (z)\|_{L_Y^p (\mathbb{R}^n)\to L_{X'}^q (\mathbb{R}^{n-1})}\quad \text{if} \quad \frac{n}{p}-\frac{n-1}{q}=2.
    \end{split}
\end{align}
We now need some estimates for $T_j (z)$, but we note that this is already done in \cite[\S 5]{BlairPark2024RestrictionOfSchrodingerEigenfunctions}. Indeed, the operators $T_j (z)$ and $\tilde{T}_j (z)$ here for $\arg z\not\in\left[-\frac{\pi}{2}, \frac{\pi}{2}\right]$ are the analogues of $S_j$ and $\tilde{S}_j$ in \cite[\S 5]{BlairPark2024RestrictionOfSchrodingerEigenfunctions}. That is, if we take the polar coordinates at $Y$, then the integral expression of $\tilde{T}_j(z)u_j(X')$ in \eqref{Tj tilde setup} has the phase function
\begin{align*}
    (X, Y)=(X, (r, \omega))\mapsto -|X-Y|=-|X-r\omega|,\quad Y=r\omega,
\end{align*}
is the Euclidean distance function for the polar coordinates. Since \cite[Theorem 1.3]{Hu2009lp} showed that this distance function satisfies the conditions in Greenleaf and Seeger \cite[Theorem 2.1-2.2]{GreenleafSeeger1994fourier}, by \cite[Theorem 2.1-2.2]{GreenleafSeeger1994fourier}, we can find $L^q$ estimates of $\tilde{T}_j(z) u_j$. In fact, \cite{Hu2009lp} showed that Riemannian distance functions satisfy the conditions in \cite{GreenleafSeeger1994fourier}, and Euclidean distance functions are the special cases of the Riemannian ones. Thus, by the proofs of \cite[Lemma 5.3-5.5]{BlairPark2024RestrictionOfSchrodingerEigenfunctions}, we have the following lemmas.

\begin{lemma}\label{Lemma Lp0 to Lq0 estimate for hypersurfaces in Rn}
    If $q_0=\frac{2n^2-2n+1}{(n-1)(n-2)}$ and $\frac{n}{p_0}-\frac{n-1}{q_0}=2$, then for $j\geq 1$
    \begin{align}\label{Lp0 to Lq0 estimate for hypersurfaces in Rn}
        \|T_j (z) u\|_{L_{x'}^{q_0}(\mathbb{R}^{n-1})}\lesssim (2^j)^{-\frac{3n-1}{2(2n^2-2n+1)}} \|u\|_{L_y^{p_0} (\mathbb{R}^n)}.
    \end{align}
    We also have that
    \begin{align}\label{Estimate for critical exp}
        \|T_j (z)u\|_{L_{x'}^{\frac{2n}{n-1}}(\mathbb{R}^{n-1})}\lesssim |z|^{-\frac{2n+1}{4n}}(2^j)^{\frac{1}{2}} \|u\|_{L_y^2 (\mathbb{R}^n)}.
    \end{align}
\end{lemma}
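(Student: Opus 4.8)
The plan is to derive both inequalities from $L^p\to L^q$ bounds for the rescaled operators $\tilde T_j(z)$ and then undo the scaling through \eqref{Operator norm comparisons for Euclidean hypersurfaces}. Since we are in the regime $\arg z\notin[-\pi/2,\pi/2]$, we have $\sin(\tfrac{\arg z}{2})\approx 1$, so by \eqref{Tj tilde setup} the operator $\tilde T_j(z)$ is an oscillatory integral operator
\[
\tilde T_j(z)u_j(X')=\int_{\mathbb{R}^n}e^{-i\lambda|X-Y|}\,a_j(X,Y,z)\,u_j(Y)\,dY,\qquad \lambda=2^j\sin(\tfrac{\arg z}{2})\approx 2^j,
\]
whose phase is the Euclidean distance function $-|X-Y|=-|(X',h(X'))-Y|$ restricted to the hypersurface and whose amplitude $a_j$ is supported where $|X-Y|\approx 1$ and $|Y|\lesssim 1$. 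On this compact region, Proposition~\ref{Prop summary of F0} shows that $a_j$ and all of its $X$- and $Y$-derivatives are bounded uniformly in $j$ and in $z$ with $|z|\geq\delta$, so any oscillatory integral estimate obtained for $\tilde T_j(z)$ is uniform in $j$ and $z$; by \eqref{Operator norm comparisons for Euclidean hypersurfaces} it then transfers to $T_j(z)$ after inserting the indicated powers of $|z|$ and $2^j$, and those powers of $|z|$ cancel precisely when $(p,q)$ lies on the line $\tfrac{n}{p}-\tfrac{n-1}{q}=2$.

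The core input is the $L^2\to L^q$ estimate for $\tilde T_j(z)$ at the exponent $q=\tfrac{2n}{n-1}$. Passing to polar coordinates $Y=r\omega$ about the origin, the phase becomes $(X,(r,\omega))\mapsto -|X-r\omega|$, which is the Euclidean distance function in the form treated in \cite[\S 5]{BlairPark2024RestrictionOfSchrodingerEigenfunctions}, a special case of the Riemannian distance function. By \cite[Theorem~1.3]{Hu2009lp} this phase satisfies the conditions of \cite[Theorems~2.1--2.2]{GreenleafSeeger1994fourier}, which, with constants uniform in $j$ and $z$ by the previous paragraph, give
\[
\|\tilde T_j(z)\|_{L^2(\mathbb{R}^n)\to L^{\frac{2n}{n-1}}(\mathbb{R}^{n-1})}\lesssim\lambda^{-\frac{(n-1)^2}{2n}}\approx(2^j)^{-\frac{(n-1)^2}{2n}}.
\]
Undoing the scaling with the first identity in \eqref{Operator norm comparisons for Euclidean hypersurfaces}, for which the exponent $(2,\tfrac{2n}{n-1})$ lies off the line $\tfrac{n}{p}-\tfrac{n-1}{q}=2$ so that a residual factor $|z|^{-\frac{2n+1}{4n}}$ survives, then produces \eqref{Estimate for critical exp}.

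For \eqref{Lp0 to Lq0 estimate for hypersurfaces in Rn} I would interpolate this estimate against the elementary bound $\|\tilde T_j(z)\|_{L^1(\mathbb{R}^n)\to L^\infty(\mathbb{R}^{n-1})}\lesssim\sup|a_j|\lesssim 1$. A direct computation shows that $(1/p_0,1/q_0)$ lies on the segment joining $(1/2,\tfrac{n-1}{2n})$ and $(1,0)$ in the $(1/p,1/q)$-plane --- indeed $p_0=\tfrac{2n^2-2n+1}{n^2+1}$ --- so interpolation yields $\|\tilde T_j(z)\|_{L^{p_0}\to L^{q_0}}\lesssim(2^j)^{-\beta}$ for the resulting $\beta>0$, and undoing the scaling with the second identity in \eqref{Operator norm comparisons for Euclidean hypersurfaces} --- where $|z|$ disappears because $\tfrac{n}{p_0}-\tfrac{n-1}{q_0}=2$, leaving only the factor $(2^j)^{\frac{n-3}{2}}$ --- reproduces the exponent $-\tfrac{3n-1}{2(2n^2-2n+1)}$. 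The step I expect to demand the most care is confirming that the Greenleaf--Seeger machinery applies with constants independent of $z$ and $j$: the curvature conditions themselves are supplied by \cite{Hu2009lp}, so the real work is verifying that the amplitude $a_j(X,Y,z)$ --- which absorbs the Gaussian factor $e^{-2^j\cos(\frac{\arg z}{2})|X-Y|}$ together with the rescaled amplitude from Proposition~\ref{Prop summary of F0} --- has seminorms bounded uniformly on $\{|X-Y|\approx 1,\ |Y|\lesssim 1\}$, and then carrying the three scales $|z|$, $2^j$, and $\lambda\approx 2^j$ through the computation; both are done in the same way in \cite[Lemmas~5.3--5.5]{BlairPark2024RestrictionOfSchrodingerEigenfunctions}.
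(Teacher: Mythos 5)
Your proposal is correct and follows essentially the same route as the paper, which defers to the proofs of \cite[Lemmas 5.3--5.5]{BlairPark2024RestrictionOfSchrodingerEigenfunctions}: rescale to the unit-scale operator $\tilde T_j(z)$, apply the Greenleaf--Seeger estimates (valid for the distance-function phase by \cite[Theorem 1.3]{Hu2009lp}, with amplitude seminorms uniform in $j,z$ because the factor $e^{-2^j\cos(\frac{\arg z}{2})|X-Y|}$ absorbs the derivative losses on $|X-Y|\approx 1$), and undo the scaling via \eqref{Operator norm comparisons for Euclidean hypersurfaces}. Your exponent arithmetic checks out: the $L^2\to L^{\frac{2n}{n-1}}$ decay $(2^j)^{-\frac{(n-1)^2}{2n}}$ recovers \eqref{Estimate for critical exp} exactly, and interpolating with the trivial $L^1\to L^\infty$ bound at $\theta=\frac{2n+1}{2n^2-2n+1}$ followed by the on-line scaling factor $(2^j)^{\frac{n-3}{2}}$ reproduces $-\frac{3n-1}{2(2n^2-2n+1)}$ in \eqref{Lp0 to Lq0 estimate for hypersurfaces in Rn}.
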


\begin{lemma}\label{Lemma upper q uniform estimate for hypersurfaces in Rn}
    For $j\geq 1$, we have
    \begin{align}\label{Upper q uniform estimate for hypersurfaces in Rn}
        \|T_j (z)u\|_{L_{x'}^{\frac{2(n-1)}{n-3}} (\mathbb{R}^{n-1})}\lesssim \|u\|_{L_y^{\frac{2n}{n+1}} (\mathbb{R}^n)}.
    \end{align}
\end{lemma}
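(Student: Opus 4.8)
The plan is to reduce \eqref{Upper q uniform estimate for hypersurfaces in Rn} to a scale-invariant estimate for the rescaled operator $\tilde{T}_j(z)$ of \eqref{Tj tilde setup}, and then to invoke the oscillatory integral bounds for distance-function phases underlying \cite[\S 5]{BlairPark2024RestrictionOfSchrodingerEigenfunctions}. First I would note that the exponents $p=\frac{2n}{n+1}$ and $q=\frac{2(n-1)}{n-3}$ satisfy $\frac{n}{p}-\frac{n-1}{q}=2$, so the second identity in \eqref{Operator norm comparisons for Euclidean hypersurfaces} gives $\|T_j(z)\|_{L^p_y(\mathbb{R}^n)\to L^q_{x'}(\mathbb{R}^{n-1})}=(2^j)^{\frac{n-3}{2}}\|\tilde{T}_j(z)\|_{L^p_Y(\mathbb{R}^n)\to L^q_{X'}(\mathbb{R}^{n-1})}$. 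Hence it suffices to prove, uniformly in $j\geq 1$ and in $z$, that
\[
\|\tilde{T}_j(z)u_j\|_{L^{q}_{X'}(\mathbb{R}^{n-1})}\lesssim (2^j)^{-\frac{n-3}{2}}\|u_j\|_{L^{p}_{Y}(\mathbb{R}^n)}.
\]
As in \S\ref{S: Proof of Thm for Euclidean case} I may assume $\arg z\notin[-\frac{\pi}{2},\frac{\pi}{2}]$, the complementary range having been handled via the rapid decay of $e^{-\sqrt{z}|x-y|}$.

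Next I would set $\lambda:=2^j\sin\!\big(\tfrac{\arg z}{2}\big)$ and record that $|\lambda|\approx 2^j$ on the range $\arg z\notin[-\frac{\pi}{2},\frac{\pi}{2}]$; by \eqref{Tj tilde setup} the operator $\tilde{T}_j(z)$ is then the oscillatory integral operator
\[
\tilde{T}_j(z)u_j(X')=\int_{\mathbb{R}^n}e^{-i\lambda|(X',h(X'))-Y|}\,a_j(X,Y,z)\,u_j(Y)\,dY
\]
with large parameter $\lambda$, phase $-|(X',h(X'))-Y|$, and amplitude $a_j$. Using Proposition \ref{Prop summary of F0} one checks that $a_j$ is smooth, supported where $|X-Y|\approx 1$ and $|Y|\lesssim 1$, and obeys $|\partial^\alpha_X\partial^\beta_Y a_j|\lesssim_{\alpha,\beta}1$ with constants independent of $j$ and $z$: the dilation $r\mapsto 2^j|z|^{-\frac12}r$ turns the bounds $|\partial_r^\alpha a(r,z)|\lesssim r^{-|\alpha|}$ into $O(1)$ bounds on $|X-Y|\approx 1$, while the factor $e^{-2^j\cos(\arg z/2)|X-Y|}\leq 1$ only helps.

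The substantive step is the $L^p\to L^q$ bound for $\tilde{T}_j(z)$. Passing to polar coordinates $Y=r\omega$ about the origin---legitimate since $|Y|\approx 1$ on the support of $a_j$---the phase becomes the Euclidean distance function $-|(X',h(X'))-r\omega|$. By Hu \cite[Theorem 1.3]{Hu2009lp} this phase satisfies the hypotheses of the Greenleaf--Seeger theorems \cite[Theorems 2.1--2.2]{GreenleafSeeger1994fourier}, just as the phases of $S_j,\tilde{S}_j$ did in \cite[\S 5]{BlairPark2024RestrictionOfSchrodingerEigenfunctions}; applying those theorems at $(p,q)=\big(\frac{2n}{n+1},\frac{2(n-1)}{n-3}\big)$, as in the proof of \cite[Lemma 5.5]{BlairPark2024RestrictionOfSchrodingerEigenfunctions}, yields $\|\tilde{T}_j(z)u_j\|_{L^q_{X'}}\lesssim|\lambda|^{-\frac{n-3}{2}}\|u_j\|_{L^p_Y}$, and since $|\lambda|\approx 2^j$, undoing the rescaling completes the proof. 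The main obstacle lies in this last step: one must check that the distance-function phase meets the (two-sided fold) nondegeneracy conditions of \cite[Theorems 2.1--2.2]{GreenleafSeeger1994fourier} at this critical endpoint and that the resulting power of $\lambda$ is \emph{exactly} $\tfrac{n-3}{2}$---so that the bound for $T_j(z)$ is uniform in $j$, which is precisely what makes it possible to interpolate against the $j$-decay of Lemma \ref{Lemma Lp0 to Lq0 estimate for hypersurfaces in Rn} and then sum over $j$ in the proof of Theorem \ref{Thm Euclidean resolvent restriction estimates for hypersurfaces}. Since that fold verification and exponent bookkeeping are carried out in \cite[Theorem 1.3]{Hu2009lp} and \cite[\S 5]{BlairPark2024RestrictionOfSchrodingerEigenfunctions}, the only point to confirm here is that replacing a hyperplane by the smooth graph $\Sigma$, together with the uniform-in-$(j,z)$ amplitude bounds above, does not disturb that analysis.
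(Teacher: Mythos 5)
Your proposal is correct and follows essentially the same route as the paper: reduce to the rescaled operator $\tilde{T}_j(z)$ via \eqref{Operator norm comparisons for Euclidean hypersurfaces} (after disposing of $\arg z\in[-\frac{\pi}{2},\frac{\pi}{2}]$ by the exponential decay of the kernel), identify the phase as the Euclidean distance function with large parameter $\approx 2^j$, and invoke Hu's verification of the Greenleaf--Seeger fold conditions as in the proofs of \cite[Lemmas 5.3--5.5]{BlairPark2024RestrictionOfSchrodingerEigenfunctions} to get the $(2^j)^{-\frac{n-3}{2}}$ bound for $\tilde{T}_j(z)$, which undoes the rescaling exactly. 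This is the argument the paper gives (by citation to those sources), with your amplitude and exponent bookkeeping made explicit.
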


\begin{lemma}\label{Lemma lower q uniform estimate for hypersurfaces in Rn}
    Suppose $\frac{n}{p}-\frac{n-1}{q}=2$. For $j\geq 1$, we have that
    \begin{align}\label{Lower q uniform estimate for hypersurfaces in Rn for n=3, 4, 5}
        \|T_j (z)u\|_{L_{x'}^{\frac{2(n-1)^2}{n^2-3n+4}} (\mathbb{R}^{n-1})}\lesssim \|u\|_{L_y^{\frac{2(n-1)}{n+1}} (\mathbb{R}^n)},\text{ when } n=3, 4, 5,
    \end{align}
    and
    \begin{align}\label{Lower q uniform estimate for hypersurfaces in Rn for n geq 6}
        \|T_j (z)u\|_{L_{x'}^{\frac{2n^2-5n+4}{n^2-4n+8}} (\mathbb{R}^{n-1})}\lesssim \|u\|_{L_y^{\frac{2n^2-5n+4}{n^2-n+2}} (\mathbb{R}^n)},\text{ when } n\geq 6.
    \end{align}
\end{lemma}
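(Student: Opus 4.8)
The plan is to follow the route already prepared for the generic dyadic piece $T_j(z)$: discard the harmless angular sector, rescale to the oscillatory integral operator $\tilde T_j(z)$, and apply the Greenleaf--Seeger bounds, this time at the ``lower-$q$'' endpoint of the admissible range, exactly as in the proof of \cite[Lemma 5.5]{BlairPark2024RestrictionOfSchrodingerEigenfunctions}. As a first reduction, only $\arg z\notin[-\tfrac{\pi}{2},\tfrac{\pi}{2}]$ needs to be treated: when $\arg z\in[-\tfrac{\pi}{2},\tfrac{\pi}{2}]$ the identity $|e^{-\sqrt z\,|x-y|}|=e^{-\mathrm{Re}\sqrt z\,|x-y|}$ together with $|z|^{1/2}|x-y|\ge 2^{j-1}\ge 1$ on the support of the kernel forces that kernel to be $O((2^j)^{-N})$ for every $N$, so, just as in the derivation of \eqref{Tj est for small arg z}, both displayed inequalities hold in that sector with room to spare. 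For $\arg z\notin[-\tfrac{\pi}{2},\tfrac{\pi}{2}]$ we have $\sin(\arg z/2)\approx 1$ and $\cos(\arg z/2)\ge 0$.

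Next I would rescale by $2^j|z|^{-1/2}$ as in \eqref{Tj Tj tilde relation}--\eqref{Abusing notation for Tj tilde opr setup}, turning the estimate into a decay estimate for $\tilde T_j(z)$, an oscillatory integral operator with large parameter $\lambda=2^j\sin(\arg z/2)\approx 2^j$, phase $-|X-Y|$, and amplitude $a_j(X,Y,z)$ supported in $\{|X-Y|\approx 1,\ |Y|\lesssim 1\}$. The amplitude $a_j$ is smooth and bounded together with all its $X,Y$ derivatives, uniformly in $j$ and in $z$ with $|z|\ge\delta$: differentiating $e^{-2^j\cos(\arg z/2)|X-Y|}$ only produces powers of $2^j\cos(\arg z/2)$, which are absorbed by the exponential on the support, and the factor inherited from the amplitude $a$ of Proposition \ref{Prop summary of F0} obeys uniform derivative bounds after the chain rule. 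Since $\frac np-\frac{n-1}q=2$, the scaling identity \eqref{Operator norm comparisons for Euclidean hypersurfaces} makes the lemma equivalent to the bound
\[
\|\tilde T_j(z)\|_{L^p(\mathbb{R}^n)\to L^q(\mathbb{R}^{n-1})}\lesssim (2^j)^{-\frac{n-3}{2}}
\]
at the exponents in \eqref{Lower q uniform estimate for hypersurfaces in Rn for n=3, 4, 5}--\eqref{Lower q uniform estimate for hypersurfaces in Rn for n geq 6}.

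To prove that bound, observe that in polar coordinates $Y=r\omega$ about $X$ the phase $-|X-Y|$ is the Euclidean distance function, which by Hu \cite[Theorem 1.3]{Hu2009lp} satisfies the curvature and fold hypotheses of Greenleaf--Seeger \cite[Theorems 2.1--2.2]{GreenleafSeeger1994fourier}; this is precisely the configuration of $\tilde S_j$ in \cite[\S 5]{BlairPark2024RestrictionOfSchrodingerEigenfunctions}. The values of $q$ listed in \eqref{Lower q uniform estimate for hypersurfaces in Rn for n=3, 4, 5}--\eqref{Lower q uniform estimate for hypersurfaces in Rn for n geq 6}, with the companion $p$ fixed by $\frac np-\frac{n-1}q=2$, are exactly those at which the Greenleaf--Seeger $L^p\to L^q$ decay rate equals $\tfrac{n-3}{2}$; feeding these into \cite[Theorems 2.1--2.2]{GreenleafSeeger1994fourier} gives $\|\tilde T_j(z)\|_{L^p\to L^q}\lesssim\lambda^{-(n-3)/2}\approx(2^j)^{-(n-3)/2}$, and multiplying back through the scaling identity finishes the argument.

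I expect the main obstacle to be this last step: pinning down which member of the Greenleaf--Seeger family of estimates is sharp at a ``below-$L^2$'' Lebesgue exponent, and then verifying that the stated algebraic $q$ (together with its companion $p$) really is the endpoint of that estimate's admissible range once intersected with the line $\frac np-\frac{n-1}q=2$. This is the point at which the dichotomy $3\le n\le 5$ versus $n\ge 6$ is forced, since the constraint that binds first among the several fold/curvature conditions changes at $n=6$. The deeper input --- that the Euclidean distance function satisfies the Greenleaf--Seeger hypotheses in the first place --- is already supplied by \cite{Hu2009lp}, so it is taken as given here.
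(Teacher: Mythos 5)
Your proposal is correct and follows essentially the same route as the paper: after disposing of the sector $\arg z\in[-\tfrac{\pi}{2},\tfrac{\pi}{2}]$ by exponential decay of the kernel, one rescales via \eqref{Tj Tj tilde relation}--\eqref{Operator norm comparisons for Euclidean hypersurfaces} and invokes the Greenleaf--Seeger bounds for the Euclidean distance phase (via \cite{Hu2009lp}) exactly as in the proof of \cite[Lemma 5.5]{BlairPark2024RestrictionOfSchrodingerEigenfunctions}, which is precisely the citation the paper relies on. The remaining numerology you flag (that these $q$, with $p$ determined by $\frac{n}{p}-\frac{n-1}{q}=2$, give decay rate $\tfrac{n-3}{2}$, with the binding constraint switching at $n=6$) is likewise deferred to that reference in the paper itself.
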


By Lemma \ref{Lemma Lp0 to Lq0 estimate for hypersurfaces in Rn}-\ref{Lemma lower q uniform estimate for hypersurfaces in Rn}, if $\frac{n}{p}-\frac{n-1}{q}=2$ and $n\in \{3, 4, 5\}$, then
\begin{align*}
    & \|T_j (z)u\|_{L^q (\mathbb{R}^{n-1})}\lesssim (2^j)^{-\frac{3n-1}{2(2n^2-2n+1)}}\|u\|_{L^p (\mathbb{R}^n)},\quad \text{if } q=\frac{2n^2-2n+1}{(n-1)(n-2)}, \\
    & \|T_j (z)u\|_{L^q (\mathbb{R}^{n-1})}\lesssim \|u\|_{L^p (\mathbb{R}^n)},\quad \text{if } q=\frac{2(n-1)}{n-3} \text{ or } q=\frac{2(n-1)^2}{n^2-3n+4}. \\
\end{align*}
By interpolation, for some $\alpha_n (p, q)>0$, if $\frac{n}{p}-\frac{n-1}{q}=2$ and $n\in \{3, 4, 5\}$, then
\begin{align}\label{Interpolation n=3, 4 for hypsurf in Euclidean}
    \begin{split}
        \|T_j (z)u\|_{L^q (\mathbb{R}^{n-1})}\lesssim (2^j)^{-\alpha_n (p, q)} \|u\|_{L^p (\mathbb{R}^n)},\quad \text{ when } j\geq 1 \text{ and } \frac{2(n-1)^2}{n^2-3n+4}<q<\frac{2(n-1)}{n-3}.
    \end{split}
\end{align}
If $\frac{n}{p}-\frac{n-1}{q}=2$ and $n\geq 6$, then by Lemma \ref{Lemma Lp0 to Lq0 estimate for hypersurfaces in Rn}-\ref{Lemma lower q uniform estimate for hypersurfaces in Rn}
\begin{align*}
    & \|T_j (z)u\|_{L^q (\mathbb{R}^{n-1})}\lesssim (2^j)^{-\frac{3n-1}{2(2n^2-2n+1)}}\|u\|_{L^p (\mathbb{R}^n)},\quad \text{if } q=\frac{2n^2-2n+1}{(n-1)(n-2)}, \\
    & \|T_j (z)u\|_{L^q (\mathbb{R}^{n-1})}\lesssim \|u\|_{L^p (\mathbb{R}^n)},\quad \text{if } q=\frac{2(n-1)}{n-3} \text{ or } q=\frac{2n^2-5n+4}{n^2-4n+8}. \\
\end{align*}
By interpolation, for some $\alpha_n (p, q)>0$, if $\frac{n}{p}-\frac{n-1}{q}=2$ and $n\geq 6$, then
\begin{align}\label{Interpolation n geq 5 for hypsurf in Euclidean}
    \|T_j (z)u\|_{L^q (\mathbb{R}^{n-1})}\lesssim (2^j)^{-\alpha_n (p, q)} \|u\|_{L^p (\mathbb{R}^n)},\quad \text{when } j\geq 1 \text{ and } \frac{2n^2-5n+4}{n^2-4n+8}\leq q<\frac{2(n-1)}{n-3}.
\end{align}
By \eqref{Interpolation n=3, 4 for hypsurf in Euclidean} and \eqref{Interpolation n geq 5 for hypsurf in Euclidean}, we have \eqref{WTS in S2} for some $\alpha_n (p, q)>0$, and this completes the proof of \eqref{Unif Sobolev est for hypsurf}.

\section{Proof of Theorem \ref{Thm Manifold resolvent restriction estimates for hypersurface}}\label{S: Proof of Thm for manifolds}
In this section, we prove \eqref{Manifold resolvent est for hypsurf}. After scaling metric if needed, we may assume $\delta=1$ where $\delta$ is as in \eqref{z lambda mu setup}. Without loss of generality, we may assume $\mu\geq 1$. Similar arguments work for $\mu\leq -1$. We follow the arguments of \cite[\S 2]{BourgainShaoSoggeYao2015Resolvent} in this section. Recall that we write $P=\sqrt{-\Delta_g}$. By \cite[(2.3)]{BourgainShaoSoggeYao2015Resolvent} and \cite[(3.16)]{BlairHuangSireSogge2022UniformSobolev}, we can write
\begin{align*}
    (-\Delta_g-(\lambda+i\mu)^2)^{-1}=\frac{i}{\lambda+i\mu}\int_0^\infty e^{i\lambda t} e^{-\mu t} (\cos tP)\:dt,\quad \lambda, \mu\geq 1.
\end{align*}
We fix a function $\rho\in C^\infty (\mathbb{R})$ satisfying
\begin{align*}
    \rho(t)=\begin{cases}
    1, & \text{if } t\leq \epsilon_0/2, \\
    0, & \text{if } t\geq \epsilon_0,
    \end{cases}
\end{align*}
where $0<\epsilon_0\ll 1$. We write
\begin{align*}
    (-\Delta_g-(\lambda+i\mu)^2)^{-1}=S_\lambda+W_\lambda,
\end{align*}
where
\begin{align}\label{S lambda W lambda setup}
    \begin{split}
        & S_\lambda=\frac{i}{\lambda+i\mu}\int_0^\infty \rho(t)e^{i\lambda t} e^{-\mu t} (\cos tP)\:dt, \\
        & W_\lambda=\frac{i}{\lambda+i\mu}\int_0^\infty (1-\rho(t))e^{i\lambda t} e^{-\mu t} (\cos tP)\:dt.
    \end{split}
\end{align}

Let $S_\lambda$ be as in \eqref{S lambda W lambda setup}. We first recall that \cite[Proposition 3.3]{BlairPark2024RestrictionOfSchrodingerEigenfunctions} gives the following $L^p (M) \to L^q (\Sigma)$ estimates.

\begin{proposition}[\cite{BlairPark2024RestrictionOfSchrodingerEigenfunctions}]\label{Prop Mfld local resolvent est for hypsurf}
    Let $(M, g)$ be a compact Riemanian manifold without boundary of dimension $n\geq 3$, and $\Sigma$ be a hypersurface of $M$. Then we have
    \begin{align*}
        \|S_\lambda f\|_{L^q (\Sigma)}\leq C_\delta \|f\|_{L^p (M)},
    \end{align*}
    where $\lambda, \mu\in \mathbb{R}, \lambda, \mu\geq 1$, and
    \begin{align}\label{p, q conditions of resolvent est for hypsurf in mfld}
        \begin{split}
            \frac{n}{p}-\frac{n-1}{q}=2, \quad \text{and} \quad \begin{cases}
            \frac{2(n-1)^2}{n^2-3n+4}<q<\frac{2(n-1)}{n-3} & \text{when } n=3, 4, 5, \\
            \frac{2n^2-5n+4}{n^2-4n+8}<q<\frac{2(n-1)}{n-3} & \text{when } n\geq 6.
            \end{cases}
        \end{split}
    \end{align}
\end{proposition}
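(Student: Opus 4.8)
The plan is to reduce $S_\lambda$ to a situation essentially identical to the one treated in Section~\ref{S: Proof of Thm for Euclidean case} and then to rerun that argument with the Euclidean distance replaced by the geodesic distance $d_g$ on $M$; this is how \cite[\S 3]{BlairPark2024RestrictionOfSchrodingerEigenfunctions} and \cite[\S 2]{BourgainShaoSoggeYao2015Resolvent} proceed. By symmetry in the sign of $\mu$ we may assume $\mu\geq 1$, and by scaling the metric we may take $\delta=1$. Working in one patch of a finite partition of unity, fix geodesic normal coordinates in which $\Sigma$ is a graph as in \eqref{Hypsurf param in Rn}. Since $\rho$ in \eqref{S lambda W lambda setup} is supported in $\{t\leq\epsilon_0\}$ and $\cos tP$ has kernel supported in $\{d_g(x,y)\leq|t|\}$ by finite propagation speed, the Hadamard parametrix for $\cos tP$ (as in \cite[\S 2]{BourgainShaoSoggeYao2015Resolvent}, \cite[(3.16)]{BlairHuangSireSogge2022UniformSobolev}) together with the $t$-integration expresses the kernel of $S_\lambda$, modulo a smooth compactly supported remainder (which maps $L^p(M)\to L^\infty\to L^q(\Sigma)$ trivially), as $\tilde a(x)\,G(d_g(x,y),w)\,\mathds{1}_{\{d_g(x,y)\leq\epsilon_0\}}$, where $w=-(\lambda+i\mu)^2$ and $G$ obeys exactly the size and amplitude bounds of $F_0$ in Proposition~\ref{Prop summary of F0} with $|z|$ replaced by $|w|=\lambda^2+\mu^2$; here the relevant branch is $\sqrt w=\mu-i\lambda$, so the kernel decays like $e^{-\mu\,d_g(x,y)}$ and oscillates like $e^{i\lambda\,d_g(x,y)}$. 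The prefactor $i/(\lambda+i\mu)$ has modulus $\leq 1$ and is harmless.

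Next I would dyadically decompose $S_\lambda=\sum_{j\geq 0}S_{\lambda,j}$ exactly as in Section~\ref{S: Proof of Thm for Euclidean case}, with $S_{\lambda,j}$ localizing to $d_g(x,y)\approx 2^j(\lambda^2+\mu^2)^{-1/2}$ (only $j$ with $2^j\lesssim\epsilon_0(\lambda^2+\mu^2)^{1/2}$ occur). For $j=0$ the kernel is dominated by $d_g(x,y)^{2-n}$, comparable in the graph coordinates to $(|x'-y'|+|h(x')-y_n|)^{2-n}$, so Proposition~\ref{BP Prop 5.2} (equivalently Lemma~\ref{Lemma Sobolev trace for hypsurf}) gives $\|S_{\lambda,0}f\|_{L^q(\Sigma)}\lesssim\|f\|_{L^p(M)}$ for all $(p,q)$ with $\frac np-\frac{n-1}q=2$, $1<p,q<\infty$. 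For $j\geq 1$ I would split into two regimes, just as the split $\arg z\in[-\tfrac\pi2,\tfrac\pi2]$ versus not in Section~\ref{S: Proof of Thm for Euclidean case}. When $\mu\geq\lambda$ one has $\mu\,d_g(x,y)\gtrsim 2^j$, so combining $|G|\lesssim|w|^{\frac{n-3}4}d_g^{-\frac{n-1}2}e^{-\mu d_g}$ with $\lambda^2+\mu^2\approx 2^{2j}d_g^{-2}$ shows the kernel of $S_{\lambda,j}$ is $\lesssim(2^j)^{-N}d_g^{2-n}$ for every $N$, which is summed via the $j=0$ bound, giving the analogue of \eqref{Tj est for small arg z}.

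The remaining regime $\lambda\geq\mu$ is the oscillatory one. Here I would rescale $x=2^j(\lambda^2+\mu^2)^{-1/2}X$, $y=2^j(\lambda^2+\mu^2)^{-1/2}Y$ as in \eqref{Tj Tj tilde relation}--\eqref{Tj tilde setup}, turning $S_{\lambda,j}$ — up to powers of $2^j$ and $\lambda^2+\mu^2$ recorded by the analogue of \eqref{Operator norm comparisons for Euclidean hypersurfaces}, which collapse to $(2^j)^{\frac{n-3}2}$ once $\frac np-\frac{n-1}q=2$ — into a unit-scale oscillatory integral operator $\tilde S_{\lambda,j}$ with amplitude supported where $|Y|\lesssim 1$ and the rescaled $d_g\approx 1$, and with phase equal to the rescaled geodesic distance times a large parameter $\approx 2^j$. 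Passing to polar coordinates about $X$, this phase is a Riemannian distance function, which by \cite[Theorem 1.3]{Hu2009lp} satisfies the hypotheses of \cite[Theorem 2.1-2.2]{GreenleafSeeger1994fourier}; applying those, as in the proofs of \cite[Lemma 5.3-5.5]{BlairPark2024RestrictionOfSchrodingerEigenfunctions}, yields the exact manifold analogues of Lemmas~\ref{Lemma Lp0 to Lq0 estimate for hypersurfaces in Rn}--\ref{Lemma lower q uniform estimate for hypersurfaces in Rn}: a geometric gain $(2^j)^{-\alpha_n(p,q)}$ at the critical index together with uniform $L^p\to L^q$ bounds at $q=\frac{2(n-1)}{n-3}$ and at the lower endpoints of \eqref{p, q conditions of resolvent est for hypsurf in mfld}. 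Interpolating and summing the geometric series in $j$, then adding the $j=0$ and $\mu\geq\lambda$ contributions and summing over the finitely many patches, gives the proposition (this being \cite[Proposition 3.3]{BlairPark2024RestrictionOfSchrodingerEigenfunctions}).

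The step I expect to be the real obstacle is the oscillatory integral estimate for $j\geq 1$: one must check that after rescaling the geodesic distance phase restricted to the graph of $\Sigma$ verifies the Greenleaf--Seeger rotational-curvature/fold conditions \emph{uniformly} in $j$ and in the base point, and then propagate the endpoint bounds across the whole range of $q$ in \eqref{p, q conditions of resolvent est for hypsurf in mfld}; this is precisely the analytic content of \cite{Hu2009lp} and \cite[\S 5]{BlairPark2024RestrictionOfSchrodingerEigenfunctions}, and is where the dimensional restriction and the precise exponent window originate. A secondary but routine point is confirming that the lower-order terms of the Hadamard parametrix and the $\rho$-truncation error contribute only smoothing operators.
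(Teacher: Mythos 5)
Your proposal is correct and takes essentially the same route as the paper: the paper gives no proof of Proposition \ref{Prop Mfld local resolvent est for hypsurf}, quoting it from \cite[Proposition 3.3]{BlairPark2024RestrictionOfSchrodingerEigenfunctions}, and your reconstruction --- Hadamard parametrix reduction of $S_\lambda$ to a kernel obeying the bounds of Proposition \ref{Prop summary of F0} in the geodesic distance, dyadic decomposition, the Sobolev trace bound for the near piece, the split into the exponentially decaying regime $\mu\gtrsim\lambda$ versus the oscillatory regime $\lambda\gtrsim\mu$, and the rescaled Hu/Greenleaf--Seeger estimates with interpolation and summation in $j$ --- is exactly the argument of the cited source and parallels the paper's own Euclidean proof in \S\ref{S: Proof of Thm for Euclidean case}. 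One minor caveat: the restriction $3\le n\le 12$ in Theorem \ref{Thm Manifold resolvent restriction estimates for hypersurface} originates in the $W_\lambda$ estimate (Lemma \ref{Lemma Truncated resolvent est for hypsurf}), not in this oscillatory-integral step, which only produces the $q$-window in \eqref{p, q conditions of resolvent est for hypsurf in mfld}.
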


For the estimate of $W_\lambda$ defined in \eqref{S lambda W lambda setup}, we follow the argument in \cite[Lemma 2.3]{BourgainShaoSoggeYao2015Resolvent} to show its analogue.

\begin{lemma}\label{Lemma Truncated resolvent est for hypsurf}
    Let $(M, g)$ be a compact Riemannian manifold of dimension $3\leq n\leq 12$, and $\Sigma$ be a hypersurface of $M$. For $\eta\in C(\overline{\mathbb{R}_+})$, we define the operator $\eta_k (P)$ by
    \begin{align*}
        \eta_k (P)f=\sum_{\lambda_j\in [k-1, k)} \eta(\lambda_j) E_j f,\quad f\in C^\infty (M),\quad k=1, 2, 3, \cdots,
    \end{align*}
    where the $E_j$ are the projections onto the $j$th eigenspace, i.e.,
    \begin{align*}
        E_j f(x)=\left(\int_M f(y)e_j (y)\:dV_g \right)e_j (x),
    \end{align*}
    where the $e_\lambda$ are as in \eqref{e lambda eigfcn}. Then we have
    \begin{align}\label{eta k (P) resulting estimate}
        \|\eta_k (P) f\|_{L^q (\Sigma)}\lesssim k \Big(\sup_{\tau\in [k-1, k]} |\eta(\tau)| \Big) \|f\|_{L^p (M)},\quad \text{if \eqref{(p, q) condition for hypsurf in mfld} holds.}
    \end{align}
\end{lemma}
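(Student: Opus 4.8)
The plan is to reduce the bound on $\eta_k(P)$ to a bound on spectral clusters of width one, i.e.\ on $\mathbf{1}_{[k-1,k)}(P)$, and then to derive the latter from the resolvent estimate in Proposition~\ref{Prop Mfld local resolvent est for hypsurf} by choosing the spectral parameter appropriately. First I would observe that since $\eta$ is continuous and the relevant eigenvalues $\lambda_j$ all lie in $[k-1,k)$, the operator $\eta_k(P)$ is, up to the harmless scalar factor $\sup_{\tau\in[k-1,k]}|\eta(\tau)|$, dominated by the unit-width spectral projector $\chi_k := \mathbf{1}_{[k-1,k)}(P)$; more precisely $\eta_k(P) = m(P)\chi_k$ with $\|m\|_\infty \le \sup_{[k-1,k]}|\eta|$, and one can absorb $m(P)$ since $\chi_k$ is a projection and $m(P)\chi_k = m(P)\chi_k\chi_k$, so it suffices to prove
\begin{align*}
    \|\chi_k f\|_{L^q(\Sigma)} \lesssim k\,\|f\|_{L^p(M)}, \quad \text{if \eqref{(p, q) condition for hypsurf in mfld} holds.}
\end{align*}
Strictly this last reduction needs a touch of care because $m(P)$ is not bounded $L^p\to L^p$ in general; the cleaner route, following \cite[Lemma 2.3]{BourgainShaoSoggeYao2015Resolvent}, is to write $\eta_k(P) f = \sum_{\lambda_j\in[k-1,k)}\eta(\lambda_j)E_jf$ directly and bound it by comparing with a resolvent whose imaginary part localizes to the shell $[k-1,k)$.

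The key step is then the following: for $z = k + i$ (so $\lambda = k \geq 1$, $\mu = 1 \geq 1$, fitting \eqref{z lambda mu setup} with $\delta=1$), write $\chi_k f = \sum_{\lambda_j\in[k-1,k)} E_j f$ and insert the resolvent $(\Delta_g + z^2)^{-1} = -\sum_j (\lambda_j^2 - z^2)^{-1} E_j$. For $\lambda_j\in[k-1,k)$ one has $|\lambda_j^2 - z^2| = |\lambda_j - z||\lambda_j + z| \approx k$ (since $|\lambda_j - z|\le |\lambda_j - k| + 1 \lesssim 1$ and $|\lambda_j + z|\approx k$), so the multiplier $(\lambda_j^2 - z^2)$ restricted to the cluster has size $\approx k$. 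Hence $\chi_k = -(\Delta_g + z^2)\, (\Delta_g+z^2)^{-1}\chi_k$ and one can express $\chi_k f$ as $(\Delta_g+z^2)^{-1}$ applied to a function whose $L^p(M)$ norm is $\lesssim k\|f\|_{L^p(M)}$ — this uses that multiplication by the bounded-away-from-zero symbol $(\lambda_j^2-z^2)/k$ on the cluster, together with $\chi_k$, is bounded on $L^p$; this itself follows from the resolvent/spectral comparison machinery in \cite{BourgainShaoSoggeYao2015Resolvent} (or a Sobolev-embedding argument for the single-shell projector). Applying Proposition~\ref{Prop Mfld local resolvent est for hypsurf} to $(\Delta_g+z^2)^{-1}$ of that function gives the $L^q(\Sigma)$ bound with the factor $k$, as desired.

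The main obstacle is matching the $(p,q)$ ranges: Proposition~\ref{Prop Mfld local resolvent est for hypsurf} holds for all $n\ge 3$ under \eqref{p, q conditions of resolvent est for hypsurf in mfld}, but the conclusion \eqref{eta k (P) resulting estimate} is claimed only for $3\le n\le 12$ and on the smaller window $\frac{2n}{n-1}\le q\le \frac{2(n-1)(n+1)}{n^2-n-4}$. So the real content is not the elementary spectral-cluster reduction but verifying that this target window is nonempty and lies inside the admissible range of Proposition~\ref{Prop Mfld local resolvent est for hypsurf} precisely when $n\le 12$: one checks $\frac{2n}{n-1} \ge \frac{2(n-1)^2}{n^2-3n+4}$ (resp.\ $\ge \frac{2n^2-5n+4}{n^2-4n+8}$ for $n\ge 6$) and that the upper endpoint $\frac{2(n-1)(n+1)}{n^2-n-4}$ does not exceed $\frac{2(n-1)}{n-3}$, with the interval collapsing to empty for $n\ge 13$. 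I expect this to come down to a handful of explicit rational-function inequalities in $n$. A secondary technical point to handle carefully is the non-$L^p$-boundedness of general spectral multipliers noted above: I would circumvent it exactly as in \cite[Lemma 2.3]{BourgainShaoSoggeYao2015Resolvent}, by never isolating $m(P)$ but instead estimating $\eta_k(P)f$ through the resolvent of $\Delta_g + (k+i)^2$ directly, using that $|\eta(\lambda_j)/(\lambda_j^2 - (k+i)^2)| \lesssim k^{-1}\sup_{[k-1,k]}|\eta|$ on the shell and that the remaining operator is a bounded compactly-supported Fourier multiplier of $P$ in the sense needed for Proposition~\ref{Prop Mfld local resolvent est for hypsurf}.
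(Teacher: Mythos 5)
There is a genuine gap here, and it is structural rather than technical. Your key step deduces the cluster bound from ``the resolvent estimate in Proposition~\ref{Prop Mfld local resolvent est for hypsurf}'' applied to $(\Delta_g+(k+i)^2)^{-1}$, but that proposition only bounds $S_\lambda$, the time-truncated (parametrix) piece of the resolvent defined in \eqref{S lambda W lambda setup}; an $L^p(M)\to L^q(\Sigma)$ bound for the \emph{full} resolvent $(\Delta_g+z^2)^{-1}$ is precisely the content of Theorem \ref{Thm Manifold resolvent restriction estimates for hypersurface}, and Lemma \ref{Lemma Truncated resolvent est for hypsurf} is exactly the ingredient the paper needs to control the complementary piece $W_\lambda$ in that proof. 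So invoking a full-resolvent restriction bound to prove the lemma is circular. Even setting that aside, your reduction requires that multiplication by the cluster-localized symbol $(\lambda_j^2-z^2)\eta(\lambda_j)/k$ be bounded on $L^p(M)$ uniformly in $k$; this is not harmless and does not follow from ``spectral comparison machinery'' --- unit-width spectral multipliers (even $\chi_k$ itself) are not uniformly bounded on $L^p$ for $p\neq 2$ --- and the argument of \cite[Lemma 2.3]{BourgainShaoSoggeYao2015Resolvent} that you cite does not proceed through the resolvent at all. What it (and the paper) actually does is write $\eta_k(P)=\chi_k\circ\eta_k(P)\circ\chi_k$ and compose two spectral-cluster inputs that your proposal never uses: Sogge's estimate $\|\chi_k\|_{L^p(M)\to L^2(M)}\lesssim k^{\sigma(p')}$ and the Burq--G\'erard--Tzvetkov/Hu restriction estimate $\|\chi_k\|_{L^2(M)\to L^q(\Sigma)}\lesssim k^{\mu(q,n-1)}$ as in \eqref{Spec bound for chi lambda}--\eqref{mu sigma setups}, together with $\|\eta_k(P)\|_{L^2\to L^2}\le\sup_{[k-1,k]}|\eta|$. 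The factor $k$ and the admissible range come from checking $\mu(q,n-1)+\sigma(p')\le 1$ under \eqref{(p, q) condition for hypsurf in mfld}; this exponent arithmetic is the real content of the lemma and is absent from your plan.

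Your account of the dimensional restriction is also incorrect on the facts. The window $\frac{2n}{n-1}\le q\le\frac{2(n-1)(n+1)}{n^2-n-4}$ does not ``collapse to empty for $n\ge 13$'': the inequality $\frac{2n}{n-1}\le\frac{2(n-1)(n+1)}{n^2-n-4}$ reduces to $3n+1\ge 0$, so the window is nonempty for every $n\ge 3$. Nor is containment of this window in the range \eqref{p, q conditions of resolvent est for hypsurf in mfld} of Proposition~\ref{Prop Mfld local resolvent est for hypsurf} the mechanism: that containment already fails at the lower endpoint $q=\frac{2n}{n-1}$ for $n\ge 8$, while the lemma is asserted up to $n=12$. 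So the ``handful of rational-function inequalities'' you propose to verify would not reproduce the statement; the constraint has to be extracted from the $\sigma,\mu$ exponent bookkeeping (and its interaction with the $S_\lambda$ estimate in the proof of the theorem), not from nonemptiness of the $q$-interval. In short, the proposal is missing the two spectral projection/restriction estimates that drive the proof, and the route it substitutes for them is circular.
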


This lemma is the reason why we focus on $3\leq n\leq 12$ in \eqref{Manifold resolvent est for hypsurf}.

\begin{proof}[Proof of Lemma \ref{Lemma Truncated resolvent est for hypsurf}]
    Let $\chi_\lambda$ be the spectral projection operator, i.e.,
    \begin{align*}
        \chi_\lambda f=\sum_{\lambda_j\in [k-1, k)} E_j f.
    \end{align*}
    By \cite{Sogge1988concerning} (see also \cite[Theorem 5.1.1]{Sogge1993fourier}) and \cite{BurqGerardTzvetkov2007restrictions} (see also \cite{Hu2009lp}), we know that, for $\lambda\geq 1$ and $q\geq 2$,
    \begin{align}\label{Spec bound for chi lambda}
        \|\chi_\lambda \|_{L^p (M)\to L^2 (M)}\lesssim \lambda^{\sigma(p')},\quad \|\chi_\lambda\|_{L^2 (M)\to L^q (\Sigma)}\lesssim \lambda^{\mu(q, n-1)},
    \end{align}
    where
    \begin{align}\label{mu sigma setups}
        \begin{split}
            & \mu (q, n-1)=\begin{cases}
                \frac{n-1}{4}-\frac{n-2}{2q}, & \text{if } 2\leq q\leq \frac{2n}{n-1}, \\
                \frac{n-1}{2}-\frac{n-1}{q}, & \text{if } \frac{2n}{n-1}\leq q\leq +\infty,
            \end{cases} \\
            & \sigma(q)=\begin{cases}
                n\left(\frac{1}{2}-\frac{1}{q} \right)-\frac{1}{2}, & \text{if } q\geq \frac{2(n+1)}{n-1}, \\
                \frac{n-1}{2}\left(\frac{1}{2}-\frac{1}{q} \right), & \text{if } 2\leq q\leq \frac{2(n+1)}{n-1}.
            \end{cases}
        \end{split}
    \end{align}
    We note that
    \begin{align*}
        \sigma(p')=\begin{cases}
            n\left(\frac{1}{p}-\frac{1}{2} \right)-\frac{1}{2}, & \text{if } 1\leq p\leq \frac{2(n+1)}{n+3}, \\
            \frac{n-1}{2}\left(\frac{1}{p}-\frac{1}{2} \right), & \text{if } \frac{2(n+1)}{n+3}\leq p\leq 2.
        \end{cases}
    \end{align*}
    Since $\eta_k(P)=\chi_k\circ \eta_k (P)\circ \chi_k$ for $k=1, 2, 3, \cdots$, by \eqref{Spec bound for chi lambda}
    \begin{align}\label{eta k (P) est computation}
        \begin{split}
            \|\eta_k (P)f\|_{L^q (\Sigma)}&\lesssim k^{\mu (q, n-1)} \|\eta_k (P) \chi_k f\|_{L^2 (M)} \\
            &\lesssim k^{\mu (q, n-1)} \Big(\sup_{\tau\in [k-1, k]} |\eta(\tau)|\Big) \|\chi_k f\|_{L^2 (M)} \\
            &\lesssim k^{\mu (q, n-1)+\sigma(p')} \Big(\sup_{\tau\in [k-1, k]} |\eta(\tau)| \Big) \|f\|_{L^p (M)},\quad 1\leq p\leq 2\leq q.
        \end{split}
    \end{align}
    We now want to find $(p, q)$ satisfying \eqref{p, q conditions of resolvent est for hypsurf in mfld} and
    \begin{align*}
        \mu(q, n-1)+\sigma(p')\leq 1,\quad \text{for } 1\leq p\leq 2\leq q.
    \end{align*}
    We note that when $\frac{n}{p}-\frac{n-1}{q}=2$, we have $p=\frac{nq}{2q+n-1}$, and so, $p=\frac{nq}{2q+n-1}\leq \frac{2(n+1)}{n+3}$ if and only if $q\leq \frac{2(n-1)(n+1)}{n^2-n-4}$. This gives us that if $p=\frac{nq}{2q+n-1}$, then
    \begin{align*}
        \sigma (p')=\begin{cases}
            \frac{n-1}{q}-\frac{n-3}{2}, & \text{for } q\leq \frac{2(n-1)(n+1)}{n^2-n-4}, \\
            \frac{n-1}{2}\left(\frac{2q+n-1}{nq}-\frac{1}{2} \right), & \text{for } q\geq \frac{2(n-1)(n+1)}{n^2-n-4}.
        \end{cases}
    \end{align*}
    With this in mind, by straightforward computation, we have
    \begin{align}\label{mu plus sigma leq 1 computation}
        \begin{split}
            \mu(q, n-1)+\sigma(p')\leq 1,\quad \text{if} \quad \frac{n}{p}-\frac{n-1}{q}=2,\; 3\leq n\leq 12, \text{ and } \frac{2n}{n-1}\leq q\leq \frac{2(n-1)(n+1)}{n^2-n-4}.
        \end{split} 
    \end{align}
    We leave this computation to the readers. Combining \eqref{eta k (P) est computation} and \eqref{mu plus sigma leq 1 computation}, we have \eqref{eta k (P) resulting estimate}, completing the proof of this lemma.
\end{proof}

We are now ready to prove Theorem \ref{Thm Manifold resolvent restriction estimates for hypersurface}. Recall that we already set
\begin{align*}
    W_\lambda=(-\Delta_g-z^2)^{-1}-S_\lambda.
\end{align*}
By Proposition \ref{Prop Mfld local resolvent est for hypsurf}, we would have the desired inequality \eqref{Manifold resolvent est for hypsurf} if we could show that
\begin{align}\label{WTS Mfld W lambda est}
    \begin{split}
        \|W_\lambda f\|_{L^q (\Sigma)}\lesssim \|f\|_{L^p (M)},\quad \lambda, \mu\in \mathbb{R},\; \lambda, |\mu|\geq 1, \quad \text{where \eqref{(p, q) condition for hypsurf in mfld} holds.}
    \end{split}
\end{align}
By construction, the multiplier for $W_\lambda$ is the function
\begin{align*}
    w_{\lambda, \mu}(\tau)=\frac{\mathrm{sgn}\mu}{i(\lambda+i\mu)} \int_0^\infty (1-\rho(t)) e^{i(\mathrm{sgn}\mu)\lambda t} e^{-|\mu|t} \cos t\tau\:dt.
\end{align*}
Since $1-\rho(t)\equiv 0$ near the origin, if we use the formula $\cos t\tau=\frac{1}{2}(e^{it\tau}+e^{-it\tau})$, then integration by parts gives us that
\begin{align*}
    |w_{\lambda, \mu}(\tau)|\leq C_N \lambda^{-1} ((1+|\lambda-\tau|)^{-N}+(1+|\lambda+\tau|)^{-N}),\quad \tau, \lambda, \mu\in \mathbb{R},\; \lambda\geq 1,\; |\mu|\geq 1.
\end{align*}
By this and Lemma \ref{Lemma Truncated resolvent est for hypsurf}, if \eqref{(p, q) condition for hypsurf in mfld} holds, then we have
\begin{align}\label{W lambda est for hypsurf in mfld}
    \|W_\lambda \|_{L^p (M)\to L^q (\Sigma)}\lesssim \sum_{k=1}^\infty k \lambda^{-1} (1+|\lambda-k|)^{-3}\lesssim 1,
\end{align}
which proves \eqref{WTS Mfld W lambda est}. This proves \eqref{Manifold resolvent est for hypsurf}.

\bibliographystyle{amsalpha}
\bibliography{references}

\end{document}